\theoremstyle{plain}
\newtheorem{theorem}{Theorem}[section]
\newtheorem{lemma}[theorem]{Lemma}
\newtheorem{proposition}[theorem]{Proposition}
\newtheorem{corollary}[theorem]{Corollary}
\theoremstyle{definition}
\theoremstyle{remark}
\newtheorem{remark}[theorem]{Remark}
\newcommand{\hc}{\ensuremath{\mathcal{H}}}
\newcommand{\bQ}{\mathbb{Q}}
\def\bin #1#2 {\left( \matrix { #1 \cr #2 \cr } \right) }
\begin{document}

\title[On the topology of a resolution of  isolated singularities]
{On the topology of a resolution of isolated singularities}

%    Information for first author
\author{Vincenzo Di Gennaro }
%    Address of record for the research reported here
\address{Universit\`a di Roma \lq\lq Tor Vergata\rq\rq, Dipartimento di Matematica,
Via della Ricerca Scientifica, 00133 Roma, Italy.}
\email{digennar@axp.mat.uniroma2.it}
%    \thanks will become a 1st page footnote.
%\thanks{}

%    Information for second author
\author{Davide Franco }
\address{Universit\`a di Napoli
\lq\lq Federico II\rq\rq, Dipartimento di Matematica e
Applicazioni \lq\lq R. Caccioppoli\rq\rq, P.le Tecchio 80, 80125
Napoli, Italy.} \email{davide.franco@unina.it}

\abstract Let $Y$ be a complex projective  variety of dimension $n$
with isolated singularities, $\pi:X\to Y$ a resolution of
singularities, $G:=\pi^{-1}{\rm{Sing}}(Y)$ the exceptional locus.
From Decomposition Theorem  one knows that the map $H^{k-1}(G)\to
H^k(Y,Y\backslash {\rm{Sing}}(Y))$ vanishes for $k>n$. Assuming this
vanishing, we give a short proof of Decomposition Theorem for $\pi$.
A consequence is a short proof of the Decomposition Theorem for
$\pi$ in all cases where one can prove the vanishing directly. This
happens when either $Y$ is a normal surface, or when $\pi$ is the
blowing-up of $Y$ along ${\rm{Sing}}(Y)$ with smooth and connected
fibres, or when $\pi$ admits a natural Gysin morphism. We prove that this
last condition is equivalent to say that the map $H^{k-1}(G)\to
H^k(Y,Y\backslash {\rm{Sing}}(Y))$ vanishes for any $k$, and that
the pull-back $\pi^*_k:H^k(Y)\to H^k(X)$ is injective. This provides
a relationship between Decomposition Theorem and Bivariant Theory.

\bigskip\noindent {\it{Keywords}}: Projective variety, Isolated singularities,
Resolution of singularities, Derived category, Intersection
cohomology, Decomposition Theorem, Bivariant Theory, Gysin
morphism, Cohomology manifold.

\medskip\noindent {\it{MSC2010}}\,: Primary 14B05; Secondary 14E15, 14F05, 14F43, 14F45, 32S20, 32S60, 58K15.

\endabstract
\maketitle

\bigskip
\section{Introduction}

Consider a $n$-dimensional complex projective variety $Y$   with
 \textit{isolated singularities}. Fix a {\it desingularization} $\pi: X\rightarrow Y$ of $Y$.
This paper is addressed at the study of some topological
properties of the map $\pi $. In a previous paper \cite{DGF1} we
already observed that, even though $\pi$ is never a \textit{local
complete intersection} map, in some very special case it may
anyway admit a \textit{natural Gysin morphism}.  By natural Gysin
morphism we mean a \textit{topological bivariant class}  \cite[\S
7]{FultonCF},  \cite{DeCM}
$$\theta \in T^0(X\stackrel{\pi}{\rightarrow}Y):=Hom_{D^b(Y)}(R\pi_*\mathbb Q_X , \mathbb Q_Y),
$$
commuting with restrictions to the smooth locus of $Y$ (here and
in the following $D^b(Y)$ denotes the \textit{bounded derived
category} of sheaves of $\mathbb Q$-vector spaces on $Y$).

In this paper we give a complete characterization of morphisms
like $\pi $ admitting a natural Gysin morphism by means of the
\textit{Decomposition Theorem} \cite{BBD}, \cite{DeCM1}, \cite{DeCM2}, \cite{DeCM3}.
In some sense, what we are going
to prove is that  $\pi$ admits a natural Gysin morphism if and only if $Y$ is
a $\bQ$-\textit{intersection cohomology manifold}, i.e.
$IC^{\bullet}_Y\simeq \bQ_Y[n]$ in $D^b(Y)$ ($IC^{\bullet}_Y$
denotes \textit{intersection cohomology complex} of $Y$ \cite[p.
156]{Dimca2}, \cite{Massey}). Furthermore, in this case, there is
a unique natural Gysin morphism $\theta$, and it arises from the
Decomposition Theorem (compare with Theorem \ref{premain} below).

The Decomposition Theorem
is a beautiful and very deep result
about algebraic maps. In the words of MacPherson \lq\lq it
contains as special cases the deepest homological properties of
algebraic maps that we know\rq\rq  \cite{Mac83},
\cite{Williamson}. As observed in \cite[Remark 2.14]{Williamson},
since the proof of the Decomposition Theorem proceeds by induction
on the dimension of the strata of the singular locus, a key point
of such a Theorem is the case of varieties with isolated
singularities:

\begin{theorem}[Decomposition Theorem for varieties with isolated singularities]
\label{DecTh} In $D^b(Y)$ we have a decomposition
$$R\,\pi_*\mathbb Q_X\cong IC^{\bullet}_Y[-n]\oplus \mathcal H^{\bullet}$$
where $\mathcal H^{\bullet}$ is quasi isomorphic to a skyscraper
complex on ${\rm{Sing}}(Y)$ and
\begin{enumerate}
\item $\hc^k(\mathcal H^{\bullet})\cong H^k(G)$, for any $k\geq n$,
\item $\hc^k(\mathcal H^{\bullet})\cong H_{2n-k} (G)$, for any $k< n$,
\end{enumerate}
where we set $G:=\pi ^{-1}({\rm{Sing}}(Y))$.
\end{theorem}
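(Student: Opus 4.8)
The plan is to build the splitting by hand and to use the hypothesis only to kill the attaching maps, exploiting Verdier duality to halve the work. Write $j\colon U=Y\setminus\mathrm{Sing}(Y)\hookrightarrow Y$ and $i\colon\mathrm{Sing}(Y)\hookrightarrow Y$, put $K:=R\pi_*\mathbb{Q}_X$, and realize $P:=IC^{\bullet}_Y[-n]$ by Deligne's truncation $P=\tau_{\le n-1}Rj_*\mathbb{Q}_U$, so that $P|_U=\mathbb{Q}_U=K|_U$. First I would fix the local models at a point $p\in\mathrm{Sing}(Y)$: proper base change gives $\mathcal{H}^m(K)_p=H^m(G_p)$ with $G_p=\pi^{-1}(p)$, and, after replacing a small ball $B_p$ by a regular neighbourhood $N_p\simeq G_p$ of the fibre with $\pi^{-1}(B_p\setminus\{p\})\cong L_p$ the link, Lefschetz duality for the compact oriented $2n$-manifold-with-boundary $(N_p,L_p)$ yields $\mathcal{H}^m(i^!K)_p=H^m(N_p,L_p)\cong H_{2n-m}(G_p)$.

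Next I would translate the hypothesis. The adjunction $\gamma\colon K\to Rj_*\mathbb{Q}_U$ induces on stalks, in degree $m$, the restriction-to-the-link map $r_m\colon H^m(G_p)\to H^m(L_p)$, and via the connecting isomorphism of the contractible pair $(B_p,L_p)$ the map $H^{k-1}(G)\to H^k(Y,Y\setminus\mathrm{Sing}(Y))$ is identified with $r_{k-1}$; thus the vanishing hypothesis says exactly that $r_m=0$ for $m\ge n$. Because $L_p$ is a closed oriented $(2n-1)$-manifold and $L_p=\partial N_p$, the images of $r_m$ and $r_{2n-1-m}$ are exact annihilators under the Poincar\'e pairing (``half lives, half dies''), so $r_m=0$ for $m\ge n$ forces $r_m$ to be \emph{surjective} for $m\le n-1$; inserting this in the long exact sequence of $(N_p,L_p)$ produces the numerical identity $H^m(G_p)\cong H^m(L_p)\oplus H_{2n-m}(G_p)$ for $m\le n-1$.

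With these preliminaries I would assemble $K\cong P\oplus\mathcal{H}^{\bullet}$ in two symmetric halves. From the truncation triangle $\tau_{\le n-1}K\to K\to\tau_{\ge n}K\xrightarrow{+1}$ the summand $\tau_{\ge n}K$ is a skyscraper with $\mathcal{H}^k=H^k(G)$ for $k\ge n$, which is assertion (1); its attaching class lives in $\mathrm{Hom}(\tau_{\ge n}K,(\tau_{\le n-1}K)[1])\cong\bigoplus_m\mathrm{Hom}(H^m(G_p),\mathcal{H}^{m+1}(i^!\tau_{\le n-1}K))$ and is controlled by the maps $r_m$ with $m\ge n$, hence vanishes by hypothesis and the triangle splits. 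Applying the Verdier self-duality $\mathbb{D}_YK\cong K[2n]$ (legitimate since $X$ is smooth and $\pi$ is proper), the same input peels off a complementary low-degree skyscraper $\mathcal{H}^{\bullet}_-$ with $\mathcal{H}^k=H_{2n-k}(G)$ for $k<n$, which is assertion (2). Finally the residual summand restricts to $\mathbb{Q}_U$ on $U$ and, by the surjectivity of the $r_m$ for $m\le n-1$ and their vanishing for $m\ge n$, meets the support condition $i^*(-)\in D^{\le n-1}$ and, dually, the cosupport condition $i^!(-)\in D^{\ge n+1}$; by the characterization of the intermediate extension it is forced to be $IC^{\bullet}_Y[-n]$, and $\mathcal{H}^{\bullet}:=\tau_{\ge n}K\oplus\mathcal{H}^{\bullet}_-$ gives the stated decomposition.

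The main obstacle is precisely the passage from these matchings of cohomology sheaves to an honest direct-sum decomposition in $D^b(Y)$: the truncation triangles a priori yield only a filtration, and the attaching maps are genuine classes in the $\mathrm{Ext}^1$-type groups displayed above. The entire force of the argument is that the vanishing hypothesis, combined with Poincar\'e--Lefschetz duality on the link and the self-duality of $K$, annihilates exactly those attaching classes; proving this vanishing on the nose---rather than merely checking that the groups or dimensions are compatible---is the delicate step on which the proof turns.
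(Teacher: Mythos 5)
Your proposal is correct in substance but takes a genuinely different route from the paper. You work entirely in $D^b(Y)$: the splitting comes from the truncation triangle $\tau_{\le n-1}K\to K\to\tau_{\ge n}K$, whose attaching class you analyse through the adjunction $\mathrm{Hom}(i_*W,(\tau_{\le n-1}K)[1])\cong\mathrm{Hom}(W,i^!(\tau_{\le n-1}K)[1])$, and you extract the surjectivity of $r_m\colon H^m(G_p)\to H^m(L_p)$ in degrees $m\le n-1$ from the hypothesis via Poincar\'e--Lefschetz duality on the link (``half lives, half dies''). The paper instead works at the chain level: it realizes $R\pi_*\mathbb{Q}_X$ by an injective resolution $\mathcal{J}^{\bullet}$, constructs an explicit monomorphism of complexes $\mathcal{H}^{\bullet}\hookrightarrow\mathcal{J}^{\bullet}$ (Lemma \ref{inc}) by producing cocycle representatives supported on $\mathrm{Sing}(Y)$, and splits the resulting short exact sequence by the elementary inductive Lemma \ref{sec}; crucially, the lifting step there rests on the \emph{global} surjectivity $H^k(X)\to H^k(G)$ for $k\ge n$ (Lemma \ref{uno}), proved with Deligne's theorem plus affine vanishing --- a Hodge-theoretic input which your argument avoids entirely, since all your verifications are local at the singular points. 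Both proofs end the same way, identifying the residual summand with $IC^{\bullet}_Y[-n]$ by the support/cosupport characterization of the intermediate extension and using self-duality of $K$ to halve the work. What your route buys is locality, purely topological inputs, and alignment with the standard perverse formalism; what the paper's buys is elementary homological algebra (no $i^!$, no attaching classes) and intermediate results (Lemmas \ref{uno}, \ref{due}, Corollary \ref{hl}) that are reused in the proof of Theorem \ref{main2}.

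Two steps of your sketch need to be written out before the argument is complete. First, ``its attaching class \dots is controlled by the maps $r_m$'' is an assertion, not a proof: under the adjunction above, the adjoint of the attaching class is the collection of connecting morphisms $\mathcal{H}^m(\tau_{\ge n}K)_p\to\mathcal{H}^{m+1}(i^!\tau_{\le n-1}K)_p$, and these vanish if and only if $\mathcal{H}^m(i^!K)_p\to\mathcal{H}^m(i^*K)_p$, i.e. $H^m(N_p,L_p)\to H^m(N_p)$, is onto for $m\ge n$, which by the long exact sequence of $(N_p,L_p)$ is exactly $r_m=0$; this is short but must be said. Second, and more seriously as written: dualizing the first splitting does not ``peel a summand off'' the decomposition you already have --- it produces a \emph{second, different} decomposition $K\cong K''\oplus S_-$ with $K'':=\mathbb{D}(\tau_{\le n-1}K)[-2n]$, and the two do not automatically combine into a three-term splitting. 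The fix is to iterate: since the natural map $\mathcal{H}^m(i^!K)\to\mathcal{H}^m(i^*K)$ is a direct sum of the corresponding maps for $K''$ and $S_-$, its surjectivity for $m\ge n$ passes to the summand $K''$, so the first step applies to $K''$ and splits off $\tau_{\ge n}K''$; the remainder $M$ then satisfies $i^*M\in D^{\le n-1}$ by truncation and $i^!M\in D^{\ge n+1}$ because $i^!K''\cong\mathbb{D}(i^*\tau_{\le n-1}K)[-2n]\in D^{\ge n+1}$, whence $M\cong IC^{\bullet}_Y[-n]$ and $K\cong M\oplus\tau_{\ge n}K''\oplus S_-$ gives the stated cohomology sheaves. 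With these two points made explicit, your proof is a valid, purely local alternative to the paper's.
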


The relationship between Gysin morphism and Decomposition Theorem
is mostly related to an important topological property of the
morphism $\pi$. Specifically, in  \cite{GMtm} and
\cite{Steenbrink} it is showed that Theorem \ref{DecTh} implies
the following vanishing

\begin{equation}
\label{top_hyp} H^{k-1}(G)\to H^k(Y,U) \hskip2mm
\textrm{vanishes for} \hskip2mm k>n.
\end{equation}

One of the main points we would like to stress in this paper
(compare with Theorem \ref{main1}) is that

\bigskip
\begin{center}\label{equivalence}
\textit{the vanishing  (\ref{top_hyp}) is  equivalent  to the
Decomposition Theorem.}
\end{center}
\bigskip

More precisely, what we are going to do in this paper is to prove
that assuming (\ref{top_hyp}), one can prove Theorem \ref{DecTh}
in few pages. Actually this equivalence is already implicit in the
argument developed by Navarro Aznar in order to prove \cite[(6.3)
Corollaire, p. 293]{N}. In fact, after proving  (\ref{top_hyp})
using Hodge Theory, in \cite{N} one proves relative Hard Lefschetz
Theorem and concludes thanks to Deligne's Theorems on degeneration
of spectral sequences. Instead, here we give a more simple and
direct proof, without using Hard Lefschetz Theorem. In fact we
deduce the splitting in derived category by a simple result about
short exact sequences of complexes (compare with Lemma \ref{sec}).

%More precisely, what we are going to do in this paper is to prove
%that assuming (\ref{top_hyp}), one can prove Theorem \ref{DecTh}
%in few pages. For instance, in our approach to Theorem \ref{DecTh}
%we do not make use of the Relative Hard Lefschetz Theorem for the
%perverse filtration \cite[p. 21]{Williamson}. Instead, we deduce
%the splitting in derived category by a simple result about short
%exact sequences of complexes (compare with Lemma \ref{sec}).

A byproduct of our result is a short proof of the Decomposition
Theorem in all cases where one can prove property (\ref{top_hyp})
directly. This happens when either $2\dim G< n$ (for trivial
reasons), or when $Y$ is a {\it normal surface} in view of {\it
Mumford's Theorem} \cite{Ishii}, \cite{Mumford}, or when $\pi:X\to
Y$ is the {\it blowing-up} of $Y$ along ${\rm{Sing}}(Y)$ with smooth and
connected fibres (see Remark \ref{rmain1}). It is worth remarking
that if $Y$ is { locally complete intersection} then {\it Milnor's
Theorem} on the connectivity of the {\it link} \cite{Dimca1} implies
(via Lemma \ref{uno} below) that  the map $H^{k-1}(G)\to H^{k}(Y,U)$
vanishes for any $k\geq n+2$. Therefore in this case the question
reduces to only check that the map $H^{n}(G)\to H^{n+1}(Y,U)$
vanishes. This in turn is equivalent to require that $H_n(G)$, which
is contained in $H_n(X)$ via push-forward, is a non degenerate
subspace of $H_n(X)$ with respect to the natural intersection form
$H_n(X)\times H_n(X)\to H_0(X)$ (see Remark \ref{rmain1}, (i)).
Another case is when $\pi $ admits a Gysin morphism. Indeed, in this
case it is very easy to prove the stronger property

$$
H^{k-1}(G)\to H^k(Y,U) \hskip2mm
\textrm{vanishes for} \hskip2mm k>0.
$$

This is the real reason why in our approach the same line of
arguments leads to both Theorem \ref{DecTh} and and the following:

\begin{theorem}\label{premain}
There exists a natural Gysin morphism for $\pi $ if and only if
$Y$ is a $\mathbb Q$-intersection cohomology manifold. In this
case, in $D^b(Y)$ we have a decomposition
$$R\,\pi_*\mathbb Q_X\,\cong\,IC^{\bullet}_Y[-n]\oplus \mathcal H^{\bullet}\,\cong\,
\mathbb Q_Y \oplus \bigoplus_{k\geq 1} R^k\pi_*\mathbb Q_X[-k].
$$
Moreover a natural Gysin morphism is unique, and, up to
multiplication by a nonzero rational number, it comes from the
decomposition above via projection onto $\mathbb Q_Y$.
\end{theorem}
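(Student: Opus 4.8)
The plan is to exploit two splittings of $R\pi_*\mathbb{Q}_X$ and to play them against each other. Throughout set $\Sigma:={\rm Sing}(Y)$, $U:=Y\setminus\Sigma$, and let $i:\Sigma\hookrightarrow Y$ be the inclusion; recall that $R^0\pi_*\mathbb{Q}_X=\mathbb{Q}_Y$ while, for $k\ge1$, $R^k\pi_*\mathbb{Q}_X$ is a skyscraper sheaf on the finite set $\Sigma$ with stalk $H^k(\pi^{-1}(p))$ at $p$.

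For the implication ``$Y$ is a $\mathbb{Q}$-intersection cohomology manifold $\Rightarrow$ a natural Gysin morphism exists'', I would feed $IC^{\bullet}_Y[-n]\cong\mathbb{Q}_Y$ into Theorem~\ref{DecTh}, obtaining $R\pi_*\mathbb{Q}_X\cong\mathbb{Q}_Y\oplus\mathcal H^{\bullet}$ with $\mathcal H^{\bullet}$ a skyscraper complex on $\Sigma$. Because any complex supported on finitely many points is formal, $\mathcal H^{\bullet}\cong\bigoplus_k\mathcal H^k(\mathcal H^{\bullet})[-k]$; comparing sheaf cohomology in the splitting and using $R^0\pi_*\mathbb{Q}_X=\mathbb{Q}_Y$ forces $\mathcal H^0(\mathcal H^{\bullet})=0$ and $\mathcal H^k(\mathcal H^{\bullet})\cong R^k\pi_*\mathbb{Q}_X$ for $k\ge1$, which is exactly the second displayed isomorphism. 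Projecting onto the first summand gives $\theta\in T^0(X\stackrel{\pi}{\to}Y)$; since $\mathcal H^{\bullet}|_U=0$, the restriction $\theta|_U$ is the identity of $\mathbb{Q}_U$, so $\theta$ commutes with restriction to the smooth locus and is a natural Gysin morphism.

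For the converse I would begin with a natural Gysin class $\theta$ and the adjunction morphism $\pi^*:\mathbb{Q}_Y\to R\pi_*\mathbb{Q}_X$. The composite $\theta\circ\pi^*$ lies in $\mathrm{Hom}_{D^b(Y)}(\mathbb{Q}_Y,\mathbb{Q}_Y)=H^0(Y)=\mathbb{Q}$ and restricts over $U$ to the identity; since $Y$ is connected, $H^0(Y)\to H^0(U)$ is injective, so $\theta\circ\pi^*$ is a nonzero scalar and, after rescaling, the identity. Thus $\pi^*$ is a split monomorphism and $R\pi_*\mathbb{Q}_X\cong\mathbb{Q}_Y\oplus C^{\bullet}$ with $C^{\bullet}=\mathrm{cone}(\pi^*)$; as $\pi^*|_U$ is an isomorphism, $C^{\bullet}|_U=0$, so $C^{\bullet}$ is a skyscraper on $\Sigma$. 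Comparing this with $R\pi_*\mathbb{Q}_X\cong IC^{\bullet}_Y[-n]\oplus\mathcal H^{\bullet}$ from Theorem~\ref{DecTh} and using that $D^b_c(Y)$ is a Krull--Schmidt category, I would match indecomposable summands: $IC^{\bullet}_Y$ is simple as a perverse sheaf, hence indecomposable, and on each side it is the only summand whose support is all of $Y$ (both sides restrict over $U$ to a single copy of $\mathbb{Q}_U$, while $C^{\bullet}$ and $\mathcal H^{\bullet}$ restrict to $0$). Hence $IC^{\bullet}_Y[-n]\cong\mathbb{Q}_Y$, i.e.\ $Y$ is a $\mathbb{Q}$-intersection cohomology manifold.

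The step I expect to be the real obstacle is uniqueness, since ``commuting with restrictions'' constrains a class only over $U$. Using the decomposition of the first paragraph,
$$\mathrm{Hom}_{D^b(Y)}(R\pi_*\mathbb{Q}_X,\mathbb{Q}_Y)\cong\mathrm{Hom}(\mathbb{Q}_Y,\mathbb{Q}_Y)\oplus\bigoplus_{k\ge1}\mathrm{Hom}_{D^b(Y)}(R^k\pi_*\mathbb{Q}_X[-k],\mathbb{Q}_Y),$$
the first component of a natural Gysin class is a scalar fixed up to a nonzero rational by the restriction condition, so everything hinges on the vanishing of the remaining summands. For this I would observe that an intersection cohomology manifold is automatically a $\mathbb{Q}$-homology manifold: the self-duality $\mathbb{D}\,IC^{\bullet}_Y\cong IC^{\bullet}_Y$ together with $IC^{\bullet}_Y\cong\mathbb{Q}_Y[n]$ yields $\omega_Y\cong\mathbb{Q}_Y[2n]$, hence $i^!\mathbb{Q}_Y\cong\mathbb{Q}_\Sigma[-2n]$. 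Adjunction then identifies $\mathrm{Hom}_{D^b(Y)}(R^k\pi_*\mathbb{Q}_X[-k],\mathbb{Q}_Y)$ with $\mathrm{Ext}^{k-2n}_{\Sigma}(R^k\pi_*\mathbb{Q}_X,\mathbb{Q}_\Sigma)$, which vanishes for $1\le k\le 2n-1$ for degree reasons and for $k\ge 2n-1$ because $R^k\pi_*\mathbb{Q}_X=0$ (the fibres of $\pi$ have real dimension at most $2n-2$). Therefore every natural Gysin morphism is a nonzero rational multiple of the projection onto $\mathbb{Q}_Y$, which gives uniqueness and the final assertion.
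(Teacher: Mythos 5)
Your proposal is correct, but it takes a genuinely different route from the paper's. You use Theorem \ref{DecTh} --- the full Decomposition Theorem of Beilinson--Bernstein--Deligne --- as a black box in both directions: in one direction you feed $IC^{\bullet}_Y[-n]\cong \mathbb Q_Y$ into it and use formality of complexes supported on finitely many points to identify $\mathcal H^{\bullet}$ with $\bigoplus_{k\geq 1}R^k\pi_*\mathbb Q_X[-k]$; in the converse you split $\pi^*$ off $R\pi_*\mathbb Q_X$ and match indecomposable summands against the BBD decomposition via the Krull--Schmidt property of $D^b_c(Y)$ (this works, since ${\rm End}(\mathbb Q_Y)\cong H^0(Y)\cong \mathbb Q$ and ${\rm End}(IC^{\bullet}_Y)\cong \mathbb Q$ are local rings and support separates the remaining summands, though you should say explicitly that $\mathbb Q_Y$ is indecomposable); and your uniqueness argument computes ${\rm Hom}_{D^b(Y)}(R\pi_*\mathbb Q_X,\mathbb Q_Y)\cong \mathbb Q$ by the adjunction $i^!\mathbb Q_Y\cong \mathbb Q_{\Sigma}[-2n]$, legitimately, because self-duality of $IC^{\bullet}_Y$ together with $IC^{\bullet}_Y\cong \mathbb Q_Y[n]$ gives $\omega_Y\cong \mathbb Q_Y[2n]$, and the fibres kill $R^k\pi_*\mathbb Q_X$ for $k\geq 2n-1$. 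The paper deliberately avoids invoking Theorem \ref{DecTh}: its proof of Theorem \ref{premain} runs through the chain of equivalences of Theorem \ref{main2}, where the splitting is produced by the elementary criterion of Lemma \ref{sec} (via Lemmas \ref{uno}, \ref{due}, \ref{inc} and Proposition \ref{tat}, i.e.\ Theorem \ref{main1} applied with the vanishing for all $k>0$, which a natural Gysin morphism forces easily), the characterization as a $\mathbb Q$-intersection cohomology manifold is routed through Poincar\'e duality and the cited results of McCrory and Massey, and uniqueness comes from the bivariant-theoretic computation $\dim_{\mathbb Q}H^0(X\stackrel{\pi}\to Y)=1$ of Remark \ref{dimuno}. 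Your route buys brevity and a purely derived-category argument; its cost is self-containedness: the paper's argument \emph{reproves} the Decomposition Theorem in this situation, whereas yours presupposes it, so it could not serve the paper's stated purpose, and it does not yield the auxiliary equivalences of Theorem \ref{main2}. One small normalization slip: the projection extracted from the decomposition restricts over $U$ to a nonzero scalar multiple of ${\rm id}_{\mathbb Q_U}$, not to the identity on the nose, so as in Remark \ref{unicity}, (i), you must divide by its degree (cf.\ Remark \ref{bivariant}) before claiming naturality --- harmless, since you perform exactly this rescaling elsewhere.
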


For a more precise and complete  statement see Theorem \ref{main2}
and Remark \ref{unicity} below. For instance, from Theorem
\ref{main2}, (ix), we see that natural Gysin morphisms occur when
$Y$ is nodal of even dimension $n$, or when $Y$ is a cone over a
smooth basis $M$ with $H^{\bullet}(M)\cong H^{\bullet}(\mathbb
P^{n-1})$. We stress that the existence of a natural Gysin
morphism forces the exceptional locus $G$ to have dimension $0$ or
$n-1$ (see Remark \ref{fibre'}).

Last but not least, we have been led to consider the issues
addressed in this paper by our previous work on Noether-Lefschetz
Theory. We refer to the papers \cite{DGF},
\cite{IJM}, \cite{DFM}, \cite{RCMP} anyone interested in the
overlaps between the topological properties investigated here and
Noether-Lefschetz Theorem (specifically, we made an heavy use of
Decomposition Theorem in \cite[Remark 3 and Theorem 6, (6.3), p.
169]{DFM}, and in \cite[Theorem 2.1, proof of (a), p. 262]{RCMP}).

\section{Notations}

$(i)$ Let $Y$ be a {complex irreducible projective variety} of
dimension $n\geq 1$, with {isolated singularities}. Let $\pi:X\to Y$
be a {resolution of the singularities} of $Y$. For any $y\in
\text{Sing}(Y)$ set $G_y:=\pi^{-1}(y)$. Set $G:=\bigcup_{y\in
\text{Sing}(Y)} G_y=\pi^{-1}(\text{Sing}(Y))$. Let
$i:G\hookrightarrow X$ be the inclusion.

\medskip
$(ii)$ All cohomology and homology groups are with $\mathbb
Q$-coefficients.

\medskip
$(iii)$ Set $U:=Y\backslash \text{Sing}(Y)\cong X\backslash G$.
Denote by $\alpha:U\hookrightarrow Y$ and $\beta:U\hookrightarrow
X$ the inclusions. For any integer $k$ we have the following
natural commutative diagram:
\begin{equation}\label{t}
\begin{array}{ccccc}
H^k(Y) &\stackrel{\pi^*_k}{\longrightarrow}  & H^k(X)\\
\quad\stackrel{\alpha^*_k}{}\searrow&\,&\swarrow\stackrel{\beta^*_k}{}\\
&H^k(U)&\\
\end{array}
\end{equation}
where all the maps denote pull-back.

\begin{remark}\label{triangolo} From commutativity of diagram (\ref{t}) we get
$\Im(\alpha^*_k)\subseteq \Im(\beta^*_k)$. Since  $H^k(Y)\cong
H^k(X)$ for $k\leq 0$ or $k\geq 2n$, we have
$\Im(\alpha^*_k)=\Im(\beta^*_k)$ for $k\leq 0$ or $k\geq 2n$. It
may happen that $\Im(\alpha^*_k)\neq \Im(\beta^*_k)$. We may
interpret the condition $\Im(\alpha^*_k)=\Im(\beta^*_k)$ as
follows. From Universal Coefficient Theorem and Lefschetz Duality
Theorem \cite[p. 248 and p. 297]{Spanier} we have $H^k(U)\cong
H_{2n-k}(Y,\text{Sing}(Y))$ for any $k$. Since $\text{Sing}(Y)$ is
finite we also have $H_{2n-k}(Y)\cong H_{2n-k}(Y,\text{Sing}(Y))$
for $k\leq 2n-2$, and $H_1(Y)\subseteq H_{1}(Y,\text{Sing}(Y))$.
Therefore, for $k\leq 2n-2$, diagram (\ref{t}) identifies with the
diagram:
$$
\begin{array}{ccccc}
H^k(Y) &\stackrel{}{\longrightarrow}  & H_{2n-k}(X)\\
\quad\stackrel{}{}\searrow&\,&\swarrow\stackrel{}{}\\
&H_{2n-k}(Y)&\\
\end{array}
$$
where the map $H^k(Y) \stackrel{}{\to}H_{2n-k}(X)$ is the
composite of Poincar\'e Duality $H^k(X)\cong H_{2n-k}(X)$ with the
pull-back $\pi^*_k$, the map $H_{2n-k}(X)\to H_{2n-k}(Y)$ is the
push-forward, and the map $H^k(Y)\stackrel{\cdot\,\cap
[Y]}{\longrightarrow} H_{2n-k}(Y)$ is the {\it duality morphism}, i.e.
the cap-product with the fundamental class $[Y]\in H_{2n}(Y)$
\cite{McCrory}. It follows that {\it
$\Im(\alpha^*_k)=\Im(\beta^*_k)$ if and only if any cycle in
$H_{2n-k}(Y)$ coming from $H_{2n-k}(X)$ via push-forward is  the
cap-product of a cocycle in $H^k(Y)$ with the fundamental class
$[Y]$}. This holds true also for $k=2n-1$ because $H_1(Y)\subseteq
H_{1}(Y,\text{Sing}(Y))\cong H^{2n-1}(U)$.
\end{remark}

\medskip
$(iv)$ Embed $Y$ in some projective space $\mathbb P^N$. For any
$y\in \text{Sing}(Y)$ choose a small closed ball $S_y\subset
\mathbb P^N$ around $y$, and set $B_y:=S_y\cap Y$,
$D_y:=\pi^{-1}(B_y)$, $B:=\bigcup_{y\in \text{Sing}(Y)} B_y$, and
$D:=\pi^{-1}(B)$. $B_y$ is homeomorphic to the cone over the {
link} $\,\partial B_y$ of the singularity $y\in Y$, with vertex at
$y$ \cite[p. 23]{Dimca1}. $B_y$ is contractible, by excision we
have $H^k(Y,U)\cong H^k(B,B\backslash \text{Sing}(Y))\cong
H^k(B,\partial B)$ for any $k$, and from the cohomology long exact
sequence of the pair $(B,\partial B)$ we get $H^k(Y,U)\cong
H^{k-1}(\partial B)$ for any $k\geq 2$. We have $\partial D\cong
\partial B$ via $\pi$, and  by excision we have
$H^k(X,U)\cong H^k(D,D\backslash G)\cong H^k(D,\partial D)$ for
any $k$ \cite[p. 38]{Dimca2}. Since $G$ is homotopy equivalent to
$D$, we  have $H^k(G)\cong H^k(D)$. Putting all together, from the
cohomology long exact sequence of the pair $(D,\partial D)$ we get
the following exact sequence
\begin{equation}\label{es1}
H^k(X,U)\stackrel{}{\to} H^k(G)\to
H^{k+1}(Y,U)\stackrel{\gamma^*_{k+1}}{\to} H^{k+1}(X,U)
\end{equation}
for any $k\geq 1$, where $\gamma^*_{k+1}$ denotes the pull-back.
Observe that since $\text{Sing}(Y)$ is finite we have
$H^k(G)=\oplus_{y\in \text{Sing}(Y)} H^k(G_y)$,
$H^k(B)=\oplus_{y\in \text{Sing}(Y)} H^k(B_y)$, $H^k(\partial B
)=\oplus_{y\in \text{Sing}(Y)} H^k(\partial B_y)$.

\begin{remark}\label{lci}
Assume $Y$ is {locally complete intersection}. In this case,
from the connectivity of the link  \cite[Milnor's Theorem p. 76, and
Hamm's Theorem  p. 80]{Dimca1}, it follows that {\it the duality
morphism $H^k(Y)\to H_{2n-k}(Y)$ is an isomorphism for any
$k\notin\{n-1,n,n+1\}$, is injective for $k=n-1$, and is
surjective for $k=n+1$. In particular
$\Im(\alpha^*_k)=\Im(\beta^*_k)$ for any $k\notin\{n-1,n\}$}. In
order to prove this property, we argue as follows. We may assume
$0<k<2n$ and $n\geq 2$. From the cohomology long exact sequence of the pair
$(Y,U)$ we have:
\begin{equation}\label{ess2}
\dots {\to}\, H^{k}(Y,U)\to H^{k}(Y)\to H^k(U)\to H^{k+1}(Y,U)\to
\dots,
\end{equation}
and by excision $H^k(Y,U)\cong H^k(B,\partial B)$. Taking into
account that each $B_y$ is contractible and that $\partial B_y$ is
path connected  \cite[loc. cit.]{Dimca1}, from the cohomology long
exact sequence of the pair $(B,\partial B)$ we get $H^1(B,\partial
B)=0$ and $H^k(B,\partial B)\cong H^{k-1}(\partial B)$ for $k\geq
2$. Since $H^k(U)\cong H_{2n-k}(Y,\text{Sing}(Y))$, and
$H_{2n-k}(Y)\cong H_{2n-k}(Y,\text{Sing}(Y))$ for $k\leq 2n-2$,
from (\ref{ess2}) we get the exact sequence for
$k\notin\{1,2n-1\}$ (compare with \cite[p. 5]{DGF2}):
$$
H^{k-1}(\partial B)\to H^{k}(Y)\to H_{2n-k}(Y)\to H^{k}(\partial
B).
$$
Each $\partial B_y$ is $(n-2)$-connected by Milnor's Theorem
\cite[loc. cit.]{Dimca1}, and it is a compact oriented real
manifold of dimension $2n-1$, in particular $h^k(\partial
B_y)=h^{2n-1-k}(\partial B_y)$ by Poincar\'e Duality \cite[p.
91]{Dimca1}. It follows that the map $H^{k}(Y)\to H_{2n-k}(Y)$ is
an isomorphism for $k\notin\{1,n-1,n,n+1,2n-1\}$. As for the case
$k=1\neq n-1$, this follows from (\ref{ess2}) because $H^{1}(Y,U)\cong
H^1(B,\partial B)=0$, $H^1(U)\cong H_{2n-1}(Y,\text{Sing}(Y))\cong
H_{2n-1}(Y)$, and $H^2(Y,U)\cong H^2(B,\partial B)\cong
H^1(\partial B)=0$ by connectivity of the link. When $k=2n-1\neq n+1$ we
have $H^{2n-1}(Y,U)\cong H^{2n-1}(B,\partial B)=H^{2n-2}(\partial
B)=0$, therefore $H^{2n-1}(Y)\hookrightarrow H^{2n-1}(U)$. On the
other hand $H_1(Y)\hookrightarrow H_1(Y,\text{Sing}(Y))\cong
H^{2n-1}(U)$. It follows that the duality morphism $H^{2n-1}(Y)\to
H_1(Y)$ is injective. Then it is an isomorphism because we have
just seen, in the case $k=1$, that $h^1(Y)=h_{2n-1}(Y)$. Finally
notice that, when $n\geq 3$, from previous analysis and
(\ref{ess2}) we get the exact sequence:
$$
0\to H^{n-1}(Y)\to H_{n+1}(Y)\to H^{n-1}(\partial B)\to
H^{n}(Y)\to H_{n}(Y)
$$
$$
\to H^{n}(\partial B)\to H^{n+1}(Y)\to H_{n-1}(Y)\to 0.
$$
Therefore the duality morphism $H^{n-1}(Y)\to H_{n+1}(Y)$ is
injective, and $H^{n+1}(Y)\to H_{n-1}(Y)$ is onto. This holds true
also when $n=2$. In fact also in this case we have
$ H^{1}(B,\partial B)=0$, which implies that
the duality morphism $H^{1}(Y)\to H_{3}(Y)$ is
injective. Moreover a similar analysis as before shows that
the image of $H^3(Y)$  and $H_1(Y)$ have the same codimension in
$H^3(U)$, and therefore they are equal. This concludes
the proof of the claim.
\end{remark}

\medskip
$(v)$ By \cite[Lemma 14, p. 351]{Spanier} we have $H^k(X,U)\cong
H_{2n-k}(G)$. Therefore from the cohomology long exact sequence of
the pair $(X,U)$ we get a long exact sequence:
\begin{equation}\label{es2}
\dots {\to}\, H^{k-1}(U)\to H_{2n-k}(G)\to
H^k(X)\stackrel{\beta^*_k}\to H^k(U)\to \dots.
\end{equation}

\medskip
$(vi)$ For any $y\in \text{Sing}(Y)$ set:
$$
H^k_y:= \begin{cases} H^k(G_y) \quad{\text{if $k\geq n$}} \\
H_{2n-k}(G_y) \quad{\text{if $k<n$.}} \end{cases}
$$
Let $\mathcal H^k_y$ be the skyscraper sheaf on $Y$ with stalk at
$y$ given by $H^k_y$. Set $H^k:=\oplus_{y\in \text{Sing}(Y)}H^k_y$
and $\mathcal H^k:=\oplus_{y\in \text{Sing}(Y)} \mathcal H^k_y$.
We consider $\mathcal H^{\bullet}$ as a
complex of sheaves on $Y$ with vanishing differentials
$d^k_{\mathcal H^{\bullet}}=0$.

\begin{remark}\label{hself} By Universal Coefficient Theorem \cite[p. 248
]{Spanier} it follows that the $\mathbb Q$-vector spaces
${H^{n-k}}$ and ${H^{n+k}}$ are isomorphic for any $k$. This
implies that {\it $\mathcal H^{\bullet}[n]$ is self-dual, i.e. in
the bounded derived category $D^b(Y)$ of $Y$ we have $\mathcal
H^{\bullet}[n]\cong D(\mathcal H^{\bullet}[n])$}.  Taking into
account that in $\mathcal H^{\bullet}[n]$ all the differentials
vanish, to prove that $\mathcal H^{\bullet}[n]$ is self-dual it
suffices to prove that the complexes $\mathcal H^{\bullet}[n]$ and
$D(\mathcal H^{\bullet}[n])$ have isomorphic sheaf cohomology.
Since $\mathcal H^{\bullet}[n]$ is supported on a finite set, this
amounts to prove that $\mathcal H^{\bullet}[n]$ and $D(\mathcal
H^{\bullet}[n])$ have isomorphic hypercohomology, i.e. that
$$
\mathbb H^k(\mathcal H^{\bullet}[n])\cong \mathbb H^k(D(\mathcal
H^{\bullet}[n]))
$$
for any $k$. But by Poincar\'e-Verdier Duality \cite[p. 69,
Theorem 3.3.10]{Dimca2} we have:
$$
\mathbb H^k(D(\mathcal H^{\bullet}[n]))\cong \mathbb
H^{-k}(\mathcal H^{\bullet}[n])^\vee \cong \mathbb
H^{n-k}(\mathcal H^{\bullet})^\vee \cong ({H^{n-k}})^\vee \cong
{H^{n+k}} \cong \mathbb H^{k}(\mathcal H^{\bullet}[n]).
$$
\end{remark}

\medskip
$(vii)$ We say that a graded morphism
$\theta_{\bullet}:H^{\bullet}(X)\to H^{\bullet}(Y)$ is {\it
natural} if for any $k$ one has $\theta_{k}\circ
\pi^*_k={\text{id}}_{H^k(Y)}$, and the following diagram commutes
\cite{DGF1}:
$$
\begin{array}{ccccc}
H^k(Y) &\stackrel{\theta_k}{\longleftarrow}  & H^k(X)\\
\quad\stackrel{\alpha^*_k}{}\searrow&\,&\swarrow\stackrel{\beta^*_k}{}\\
&H^k(U),&\\
\end{array}
$$
i.e. $\alpha^*_k\circ \theta_k=\beta^*_k$.

\medskip
\begin{remark}\label{natural} The existence of a natural graded
morphism $\theta_{\bullet}:H^{\bullet}(X)\to H^{\bullet}(Y)$ is
equivalent to say that, for any $k$, the pull-back
$\pi^*_k:H^k(Y)\to H^k(X)$ is injective and $\Im(\alpha^*_k)=
\Im(\beta^*_k)$ (compare with the proof of (i) $\implies$ (ii) in
Theorem \ref{main2} below).
\end{remark}

\medskip
$(viii)$ We say that a (topological) {bivariant} class $\theta
\in Hom_{D^b(Y)}(R\pi_*\mathbb Q_X, \mathbb Q_Y)$ is {\it natural}
if the induced graded morphism $\theta_{\bullet}:H^{\bullet}(X)\to
H^{\bullet}(Y)$ is natural \cite{DGF1}, \cite{FultonCF}.

\begin{remark}\label{bivariant} Fix any bivariant class $\theta
\in H^0(X\stackrel{\pi}\to Y)\cong  Hom_{D^b(Y)}(R\pi_*\mathbb
Q_X, \mathbb Q_Y)$. Let $\theta_0:H^0(X)\to H^0(Y)$ be the induced
map. Let $q\in\mathbb Q$ be such that $\theta_0(1_X)=q\cdot 1_Y\in
H^0(Y)\cong \mathbb Q$ \cite[p. 238]{Spanier}. Put
$$
\deg \theta:=q.
$$
For any $k$ and any $c\in H^k(Y)$, by the projection formula
\cite[(G$_4$), (i), p. 26]{FultonCF}, and \cite[9, p.
251]{Spanier}, we have :
\begin{equation}\label{bivdue}
\theta_k(\pi^*_k(c))=\theta_k(1_X\cup
\pi^*_k(c))=\theta_0(1_X)\cup c=\deg \theta\cdot (1_Y\cup c)=\deg
\theta\cdot c.
\end{equation}
It follows that for any $k$ one has:
\begin{equation}\label{bivuno}
\theta_{k}\circ \pi^*_k=\deg \theta\cdot {\rm{id}}_{H^k(Y)}.
\end{equation}
Next consider the independent square:
$$
\begin{array}{ccccc}
U &\stackrel{\beta}{\hookrightarrow}  &  X\\
\stackrel{}{}\Vert&  &\stackrel{\pi}{}\downarrow\\
U& \stackrel{\alpha}{\hookrightarrow} &Y\\
\end{array}
$$
and set $\theta':=\alpha^*(\theta)\in Hom_{D^b(U)}(\mathbb Q_U,
\mathbb Q_U)$ \cite[(G$_2$), p. 26]{FultonCF}. Applying
\cite[(G$_2$), (ii), p. 26]{FultonCF} to the square:
$$
\begin{array}{ccccc}
H^0(U) &\stackrel{\beta^*_0}{\leftarrow}  &  H^0(X)\\
\stackrel{\theta'_0}{}\downarrow&  &\stackrel{\theta_0}{}\downarrow\\
H^0(U)& \stackrel{\alpha^*_0}{\leftarrow} &H^0(Y)\\
\end{array}
$$
we get
$$
\theta'_0(1_U)=\theta'_0(\beta^*_0(1_X))=\beta^*_0(\theta_0(1_X))=\beta^*_0(\deg
\theta\cdot 1_Y)=\deg\theta\cdot\beta^*_0(1_Y)=\deg\theta\cdot
1_U.
$$
Since $\pi_{|_U}= {\rm{id}}_{U}$, as in (\ref{bivdue})  we deduce
for any $k$ and any $c\in H^k(U)$:
$$
\theta'_k(c)=\theta'_k(({\pi{|_U}})^*_k(c))=\theta'_k(1_U\cup
c)=\theta'_0(1_U)\cup c=\deg \theta\cdot (1_U\cup c)=\deg
\theta\cdot c,
$$
i.e.
\begin{equation}\label{bivquattro}
\theta'_k=\deg\theta \cdot{\rm{id}}_{H^k(U)}.
\end{equation}
From \cite[(G$_2$), (ii), p. 26]{FultonCF} it follows that
\begin{equation}\label{bivtre}
\deg\theta \cdot \beta^*_k=\theta'_k\circ\beta^*_k=\alpha^*_k\circ
\theta_k
\end{equation}
for any $k$. By (\ref{bivuno}) and (\ref{bivtre}) we see that {\it
a bivariant class $\theta$ is natural if and only if $\deg
\theta=1$, and this is equivalent to say that $\beta^*_k=
\alpha^*_k\circ \theta_k$ for any $k$}. Observe that if $\theta$
is any bivariant class with $\deg\theta\neq 0$, then
$\frac{1}{\deg\theta}\theta$ is natural.
\end{remark}

\medskip
$(ix)$ We say that $Y$ is a {\it $\mathbb Q$-cohomology (or
homology) manifold} if for any $y\in Y$ and any $k\neq 2n$ one has
$H^k(Y,Y\backslash\{y\})=0$, and $H^{2n}(Y,Y\backslash\{y\})\cong
\mathbb Q$ \cite{Massey}, \cite{McCrory}. Recall  that $Y$ is a
{$\mathbb Q$-{intersection cohomology manifold}} if
$IC^{\bullet}_Y\cong \bQ_Y[n]$ in $D^b(Y)$, where $IC^{\bullet}_Y$
denotes the {intersection cohomology complex} of $Y$ \cite[p.
156]{Dimca2}, \cite{Massey}.

\begin{remark}\label{dimuno} By \cite[3.1.4, p. 34]{FultonCF}
we know that there is a mapping $\phi:X\to \mathbb R^m$ such that
$(\pi, \phi):X\to Y\times \mathbb R^m$ is a closed imbedding. In
this case one has
$$
H^0(X\stackrel{\pi}\to Y)\cong H^m(Y\times \mathbb R^m,Y\times
\mathbb R^m\backslash X_{\phi}),
$$
where $X_{\phi}$ is the image of $X$ in $Y\times \mathbb R^m$. If
$Y$ is a $\mathbb Q$-cohomology manifold, then by Poincar\'e-Alexander-Lefschetz Duality
\cite[Theorem 1.1]{AFP} we have:
$$
H^m(Y\times \mathbb R^m,Y\times \mathbb R^m\backslash
X_{\phi})\cong H_{2n}(X).
$$
It follows that
\begin{equation}\label{bivcinque}
\dim_{\mathbb Q} H^0(X\stackrel{\pi}\to Y)=1.
\end{equation}
On the other hand, since $U$ is smooth, we also have \cite[Lemma 2
and (26), p. 217]{FultonYT}:
$$
H^0(U\stackrel{{\rm{id}}_U}\to U)\cong H^m(U\times \mathbb
R^m,U\times \mathbb R^m\backslash U_{\phi})\cong
H^{BM}_{2n}(U)\cong H^0(U)\cong \mathbb Q,
$$
where $H^{BM}_{2n}(U)$ denotes Borel-Moore homology. Therefore the
pull-back
$$
\alpha^*:H^0(X\stackrel{\pi}\to Y)\to
H^0(U\stackrel{{\rm{id}}_U}\to U)
$$
for bivariant classes identifies with the restriction in
Borel-Moore homology:
$$
H_{2n}(X)\cong H^{BM}_{2n}(U).
$$
Comparing with (\ref{bivquattro}) and (\ref{bivcinque}), this
proves that {\it if $Y$ is a $\mathbb Q$-cohomology manifold then
there is a unique natural bivariant class}.
\end{remark}

\medskip
$(x)$ Let $\mathcal I^{\bullet}$ be an injective resolution of
$\mathbb Q_X$. Let $\mathcal J^{\bullet}:=\pi_*(\mathcal
I^{\bullet})$ be the derived direct image $R\,\pi_*\mathbb Q_X$ of
$\mathbb Q_X$ in $D^b(Y)$. When $k\geq 1$ the cohomology sheaves
$R^k \pi_*\mathbb Q_X=H^k(\mathcal J^{\bullet})$ are supported on
$\text{Sing}(Y)$, and for any $y\in \text{Sing}(Y)$ we have
$H^k(\mathcal J^{\bullet})_y=H^k(G_y)$.

\begin{remark}\label{jself} The complex $\mathcal J^{\bullet}[n]$
is self-dual. In fact by \cite[p. 69, Proposition 3.3.7,
(ii)]{Dimca2} we have:
$$
D(\mathcal J^{\bullet}[n])=D(R\pi_*\mathbb
Q_X[n])=R\pi_*(D(\mathbb Q_X[n]))=R\pi_*(\mathbb Q_X[n])=\mathcal
J^{\bullet}[n].
$$
\end{remark}

\medskip
$(xi)$ Since $Y$ has only isolated singularities, we have
\cite[Proposition 5.4.4, p. 157]{Dimca2}:
\begin{equation}\label{cic}
IH^k(Y)\cong \begin{cases} H^k(Y) \quad{\text{if $k>n$}}\\
\Im(\alpha^*_n) \quad{\text{if $k=n$}}\\ H^k(U) \quad{\text{if
$k<n$.}} \end{cases}
\end{equation}

\section{The main results}

Theorem \ref{main1} below is essentially already known. Property
(i) implies (ii) by \cite[Theorem 1.11, p. 518]{Steenbrink}. That
property (ii) implies (i) is implicit in the argument developed by
Navarro in order to prove \cite[(6.3) Corollaire, p. 293]{N} using
a relative version of Hard Lefschetz Theorem. Here we give a more
simple and direct proof that (ii) implies (i), without using Hard
Lefschetz Theorem.

\begin{theorem}
\label{main1} The following properties are equivalent.

\smallskip
(i) In the derived category of $Y$ there is an isomorphism
$R\pi_*\mathbb Q_X\cong IC_Y[n]\oplus \mathcal H^{\bullet}$.

\smallskip
(ii) The map $H^{k-1}(G)\to H^k(Y,U)$ vanishes for any
$k> n$.
\end{theorem}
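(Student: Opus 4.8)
The plan is to prove the substantive direction (ii)$\implies$(i); the reverse implication is the quoted theorem of Steenbrink. Write $\jc^\bullet:=R\pi_*\bQ_X$, so that $\mathcal H^0(\jc^\bullet)=\bQ_Y$ and, for $k\geq 1$, $\mathcal H^k(\jc^\bullet)=R^k\pi_*\bQ_X$ is the skyscraper $\mathcal H^k$ with stalk $H^k(G_y)$ at $y$ (Notation (x)). My strategy is to carve the intersection complex out of $\jc^\bullet$ as a direct summand by peeling off the two halves of $\hc^\bullet$ separately, the upper half being governed precisely by the vanishing (ii) and the lower half by Verdier self-duality.

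First I would split off the top half. Consider the canonical truncation triangle
$$\tau_{\leq n-1}\jc^\bullet\longrightarrow \jc^\bullet\longrightarrow \tau_{\geq n}\jc^\bullet\xrightarrow{\ \delta\ }\big(\tau_{\leq n-1}\jc^\bullet\big)[1].$$
Because the cohomology sheaves of $\tau_{\geq n}\jc^\bullet$ are skyscrapers on the finite set $\mathrm{Sing}(Y)$, and morphisms between such reduce (via $i_y^!(i_y)_\ast\cong\mathrm{id}$ for the point inclusion $i_y:\{y\}\hookrightarrow Y$) to extensions of $\bQ$-vector spaces over a point, this complex is formal: $\tau_{\geq n}\jc^\bullet\cong\bigoplus_{k\geq n}\mathcal H^k[-k]$, the upper half of $\hc^\bullet$. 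By the splitting criterion of Lemma \ref{sec} it then suffices to show $\delta=0$, giving $\jc^\bullet\cong\tau_{\leq n-1}\jc^\bullet\oplus\bigoplus_{k\geq n}\mathcal H^k[-k]$.

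To see that $\delta=0$ I would filter $\tau_{\leq n-1}\jc^\bullet$ by its cohomology sheaves $\bQ_Y$ (degree $0$) and $\mathcal H^q(\jc^\bullet)$ ($1\leq q\leq n-1$). For $k\geq n$ and $q\geq1$ the off-diagonal groups $\mathrm{Hom}_{D^b(Y)}(\mathcal H^k[-k],\mathcal H^q(\jc^\bullet)[1-q])$ and $\mathrm{Hom}_{D^b(Y)}(\mathcal H^k[-k],\mathcal H^q(\jc^\bullet)[-q])$ are $\mathrm{Ext}$'s over a point in nonzero degree and vanish, so the obstruction is detected on the degree-$0$ piece:
$$\mathrm{Hom}_{D^b(Y)}\!\big(\tau_{\geq n}\jc^\bullet,(\tau_{\leq n-1}\jc^\bullet)[1]\big)\;\cong\;\bigoplus_{k\geq n}\mathrm{Ext}^{k+1}_Y\!\big(\mathcal H^k,\bQ_Y\big).$$
A local-cohomology computation, using $\mathrm{Ext}^{k+1}_Y((i_y)_\ast\bQ,\bQ_Y)\cong H^{k+1}(Y,Y\setminus\{y\})$ together with $\bigoplus_y H^{k+1}(Y,Y\setminus\{y\})\cong H^{k+1}(Y,U)$, rewrites this group as $\bigoplus_{k\geq n}\mathrm{Hom}\big(H^k(G),H^{k+1}(Y,U)\big)$. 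The hard part — which I expect to be the crux of the whole argument — is to verify that under this identification $\delta$ is exactly the topological connecting map $H^k(G)\to H^{k+1}(Y,U)$ of the exact sequence (\ref{es1}); concretely, one must match the derived connecting morphism of the truncation triangle with the coboundary of the long exact sequence of the pair $(D,\partial D)$, for instance by passing through the adjunction triangle $\iota_\ast\iota^!\jc^\bullet\to\jc^\bullet\to R\alpha_\ast\bQ_U$ for the closed inclusion $\iota:\mathrm{Sing}(Y)\hookrightarrow Y$. Granting this, (ii) says exactly that $H^k(G)\to H^{k+1}(Y,U)$ vanishes for every $k\geq n$, so $\delta=0$.

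Finally I would extract the intersection complex. Set $\mathcal K^\bullet:=\tau_{\leq n-1}\jc^\bullet$; it restricts to $\bQ_U$ on $U$ and has no stalk cohomology in degrees $\geq n$, the support condition of $IC_Y[n]$. Introduce the contravariant self-equivalence $\mathbb D:=D(-)[-2n]$, which by Remarks \ref{jself} and \ref{hself} fixes $\jc^\bullet$ and $\hc^\bullet$ and carries $\bigoplus_{k\geq n}\mathcal H^k[-k]$ to $\bigoplus_{k\leq n}\mathcal H^k[-k]$. Applying $\mathbb D$ to the decomposition above, and then splitting the lower half $\bigoplus_{k<n}\mathcal H^k[-k]$ off $\mathcal K^\bullet$ by the Verdier-dual of the first peeling (the new obstruction being the image under $\mathbb D$ of the class already annihilated by (ii)), I expect to be left with a summand $\mathcal M^\bullet$ that restricts to $\bQ_U$ on $U$, satisfies the support condition, and — being $\mathbb D$-self-dual — also the cosupport condition. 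Since these are the defining axioms of the intersection complex for isolated singularities (equivalently $\mathcal M^\bullet\cong\tau_{\leq n-1}R\alpha_\ast\bQ_U$), this forces $\mathcal M^\bullet\cong IC_Y[n]$, and collecting the three summands yields $\jc^\bullet\cong IC_Y[n]\oplus\hc^\bullet$. The secondary technical point is the bookkeeping of the degree-$n$ skyscraper shared by the two halves, which I would settle by matching the two self-dual decompositions (e.g.\ by Krull--Schmidt in $D^b_c(Y)$).
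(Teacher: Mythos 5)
Your architecture --- truncation triangle, formality of complexes supported on points, d\'evissage of the obstruction into $\bigoplus_{k\geq n}\mathrm{Ext}^{k+1}_Y(\mathcal H^k,\mathbb Q_Y)$, then Verdier duality plus Krull--Schmidt --- is genuinely different from the paper's and is in principle viable. But as written it has a hole exactly at the step you yourself flag: you never prove that, under the identification $\mathrm{Ext}^{k+1}_Y(\mathcal H^k,\mathbb Q_Y)\cong \mathrm{Hom}(H^k(G),H^{k+1}(Y,U))$, the components of the connecting morphism $\delta$ of the truncation triangle are the topological coboundaries of (\ref{es1}). This is not a routine compatibility, and it is the only place where hypothesis (ii) enters your argument, so \lq\lq granting this\rq\rq\ concedes precisely the content of (ii)$\implies$(i). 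Note in particular that one cannot read the identification off the long exact hypercohomology sequence of the truncation triangle: $\mathbb H^{k+1}(Y,\tau_{\leq n-1}R\pi_*\mathbb Q_X)$ is not $H^{k+1}(Y,U)$, so the statement really is about the Hom-class extracted through the isomorphism $\mathrm{Hom}(\tau_{\geq n}\mathcal J^{\bullet},\mathbb Q_Y[1])\cong \mathrm{Hom}(\tau_{\geq n}\mathcal J^{\bullet},(\tau_{\leq n-1}\mathcal J^{\bullet})[1])$, and verifying it requires chasing the octahedra relating the truncation triangle to the adjunction triangles for $U\hookrightarrow Y\hookleftarrow \mathrm{Sing}(Y)$, together with proper base change $\iota^{-1}R\pi_*\cong R\pi_{G*}\,i_G^{-1}$ to identify $H^k(G)$ and $H^{k+1}(Y,U)$ sheaf-theoretically. (A smaller slip: splitting a distinguished triangle whose connecting map vanishes is a basic triangulated-category fact; Lemma \ref{sec} is a chain-level statement about termwise sections and is not what you need there.)

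It is instructive to see how the paper sidesteps exactly this point. There, hypothesis (ii) is first converted, via Lemma \ref{uno} and Lemma \ref{due}, into the concrete condition $\Im(\alpha^*_k)=\Im(\beta^*_k)$ for $k\geq n$; Lemma \ref{inc} then uses this to lift classes of $H^k(G)$ (and, for $k<n$, of $H_{2n-k}(G)$, via Lemma \ref{uno}) to cocycles in $\Gamma(\mathcal J^k)$ supported on $\mathrm{Sing}(Y)$, embedding the \emph{whole} complex $\mathcal H^{\bullet}$ --- both halves at once --- as a subcomplex of the injective resolution; Proposition \ref{tat} and the elementary Lemma \ref{sec} split it off with no derived-category obstruction theory at all. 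Your endgame (support condition on stalks, cosupport from self-duality via Remarks \ref{hself} and \ref{jself}, then the characterization of the intermediary extension) coincides with the paper's, and your second peeling together with the degree-$n$ bookkeeping is indeed rescued by Krull--Schmidt, which does hold in $D^b_c(Y)$ ($\mathbb Q$-linear, Hom-finite, idempotent-complete). So the proposal diverges from the paper exactly in the mechanism converting the topological vanishing into a splitting, and it is that mechanism --- the identification of $\delta$ with the coboundary of (\ref{es1}) --- that remains to be proved before this counts as a proof.
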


\medskip
The equivalence of properties (v), (vi) and (vii) in next
Theorem  \ref{main2} are already known \cite{BM}, \cite{McCrory},
\cite{Massey}. We insert them in the claim for Reader's
convenience. We refer to \cite{Massey} for other equivalence
concerning a $\mathbb Q$-cohomology manifold.

\begin{theorem}
\label{main2} The following properties are equivalent.

\smallskip
(i) The map $H^{k-1}(G)\to H^k(Y,U)$ vanishes for any
$k>0$ and the pull-back $\pi^*_k$ is injective.

\smallskip
(ii) There exists a natural graded morphism
$\theta_{\bullet}:H^{\bullet}(X)\to H^{\bullet}(Y)$.

\smallskip
(iii) There exists a natural bivariant class $\theta \in
Hom_{D^b(Y)}(R\pi_*\mathbb Q_X, \mathbb Q_Y)$.

\smallskip
(iv)  The natural map ${H^{\bullet}(Y)}\to{IH^{\bullet}(Y)}$is an
isomorphism;

\smallskip
(v)  $Y$ is a $\mathbb Q$-intersection cohomology manifold.

\smallskip
(vi)  $Y$ is a $\mathbb Q$-cohomology manifold.

\smallskip
(vii) The duality morphism $H^{\bullet}(Y)\stackrel{\cdot\,\cap
[Y]}\longrightarrow H_{2n-\bullet}(Y)$ is an isomorphism (i.e. $Y$
satisfies Poincar\'e Duality).

\smallskip
(viii)  In $D^b(Y)$ there exists a decomposition
\begin{equation}\label{eldec}
R\,\pi_*\mathbb Q_X\cong \mathbb Q_Y\oplus \bigoplus_{k\geq 1}
R^k\pi_*\mathbb Q_X[-k].
\end{equation}

\smallskip\noindent
Moreover, if $\pi:X\to Y$ is the blowing-up of $Y$ along
${\rm{Sing}}(Y)$ with smooth and connected fibres, then previous
properties are equivalent to the following property:

\smallskip
(ix) For any $y\in \text{Sing}(Y)$ one has
${H^{\bullet}(G_y)}\cong {H^{\bullet}(\mathbb P^{n-1})}$.
\end{theorem}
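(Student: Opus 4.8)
The plan is to prove all the equivalences by a single cycle of implications, taking the equivalences (v) $\iff$ (vi) $\iff$ (vii) as known (\cite{BM}, \cite{McCrory}, \cite{Massey}) and grafting the remaining conditions onto this block. Concretely, I would establish
$$\text{(i)}\Rightarrow\text{(viii)}\Rightarrow\text{(v)}\Rightarrow\text{(vi)}\Rightarrow\text{(iii)}\Rightarrow\text{(ii)}\Rightarrow\text{(i)},$$
and then attach (iv) by the short chain (v) $\Rightarrow$ (iv) $\Rightarrow$ (vii). Several of these arrows are soft. The implication (vi) $\Rightarrow$ (iii) is exactly Remark \ref{dimuno}, which produces a (unique) natural bivariant class on a $\mathbb Q$-cohomology manifold. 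The implication (iii) $\Rightarrow$ (ii) is immediate, since by definition a natural bivariant class induces a natural graded morphism. For (v) $\Rightarrow$ (iv), if $IC^{\bullet}_Y\cong\mathbb Q_Y[n]$ then $IH^{\bullet}(Y)=\mathbb H^{\bullet}(IC^{\bullet}_Y[-n])=H^{\bullet}(Y)$ and the natural map is the identity; for (iv) $\Rightarrow$ (vii) one transports the Poincar\'e duality that $IH^{\bullet}$ always enjoys across the isomorphism $H^{\bullet}(Y)\cong IH^{\bullet}(Y)$.

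The equivalence (i) $\iff$ (ii) I would extract from Remark \ref{natural}, which reduces (ii) to ``$\pi^*_k$ injective for all $k$ and $\Im(\alpha^*_k)=\Im(\beta^*_k)$ for all $k$''. Since both (i) and (ii) include the injectivity of $\pi^*_k$, it suffices, assuming that injectivity, to match the vanishing of $H^{k-1}(G)\to H^k(Y,U)$ with the equality of images. The tool is the morphism between the long exact sequences of the pairs $(Y,U)$ and $(X,U)$, which share the term $H^{\bullet}(U)$; writing $\delta_Y,\delta_X$ for the connecting maps and $\gamma^*$ for the pullback $H^{\bullet}(Y,U)\to H^{\bullet}(X,U)$, commutativity gives $\delta_X=\gamma^*\circ\delta_Y$. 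By (\ref{es1}) the vanishing of $H^{k-1}(G)\to H^k(Y,U)$ is, for $k\geq 2$, equivalent to the injectivity of $\gamma^*_k$, and $\gamma^*$ injective forces $\ker\delta_X=\ker\delta_Y$, i.e. $\Im(\beta^*)=\Im(\alpha^*)$. The converse is a diagram chase: given the image equality and $\pi^*$ injective, pick $x\in\ker\gamma^*_k$, push it into $H^k(Y)$ where $\pi^*_k$ injective shows it dies, write $x=\delta_Y(u)$, and conclude $x=0$ from $\delta_X(u)=\gamma^*_k(x)=0$ together with $\ker\delta_X=\ker\delta_Y$. The boundary case $k=1$ (and the top degree) is checked directly using $H^1(Y,U)\cong H^1(B,\partial B)$.

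The heart of the matter is (i) $\Rightarrow$ (viii), and this is where I expect the main obstacle. I would run the derived-category splitting already used for Theorem \ref{main1}, but now exploiting the vanishing for all $k>0$ rather than only $k>n$. Writing $\mathcal J^{\bullet}=R\pi_*\mathbb Q_X$, one has $\mathcal H^0(\mathcal J^{\bullet})=\mathbb Q_Y$ and $\mathcal H^k(\mathcal J^{\bullet})=R^k\pi_*\mathbb Q_X$ a skyscraper on $\text{Sing}(Y)$ for $k\geq 1$, so the cone $\mathcal C$ of the unit $\mathbb Q_Y\to\mathcal J^{\bullet}$ is supported on $\text{Sing}(Y)$ with these skyscrapers as cohomology sheaves. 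The key point is that the obstructions to splitting $\mathcal C$ into $\bigoplus_{k\geq 1}R^k\pi_*\mathbb Q_X[-k]$, and to splitting off the summand $\mathbb Q_Y$, live in local $\mathrm{Ext}$-groups between these skyscrapers, which by the conical structure of $Y$ near each $y$ are computed by $H^{\bullet}(B_y,\partial B_y)\cong H^{\bullet}(Y,U)$; the resulting obstruction classes are precisely the maps $H^{k-1}(G)\to H^k(Y,U)$ of (i). Feeding the vanishing into Lemma \ref{sec} degree by degree along the finite truncation tower $\mathbb Q_Y=\tau_{\leq 0}\mathcal J^{\bullet}\to\cdots\to\tau_{\leq 2\dim G}\mathcal J^{\bullet}=\mathcal J^{\bullet}$, all connecting morphisms vanish and (\ref{eldec}) follows. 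For (viii) $\Rightarrow$ (v), I would compare (\ref{eldec}) with the Decomposition Theorem \ref{DecTh}, $R\pi_*\mathbb Q_X\cong IC^{\bullet}_Y[-n]\oplus\mathcal H^{\bullet}$, which holds unconditionally: since $IC^{\bullet}_Y[-n]$ is the unique direct summand restricting to $\mathbb Q_U$ and having no skyscraper constituents, matching it against the non-skyscraper summand $\mathbb Q_Y$ of (\ref{eldec}) forces $IC^{\bullet}_Y\cong\mathbb Q_Y[n]$, i.e. (v); the self-dualities of $\mathcal H^{\bullet}[n]$ and $\mathcal J^{\bullet}[n]$ (Remarks \ref{hself}, \ref{jself}) make the skyscraper parts agree. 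The delicate issue throughout is controlling the extension classes in $D^b(Y)$ rather than merely their effect on (hyper)cohomology.

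Finally, for the addendum (ix) I would specialize to the blow-up situation, where each fibre $G_y$ is a smooth connected divisor of dimension $n-1$ in $X$, so that the neighbourhood $D_y$ retracts onto $G_y$ and its boundary $\partial D_y\cong\partial B_y$ is the unit circle bundle of the normal bundle $N_{G_y}$, with Euler class $e=c_1(N_{G_y})\in H^2(G_y)$. By the cohomology-manifold criterion, property (vi) holds if and only if every link $\partial B_y$ is a $\mathbb Q$-homology $(2n-1)$-sphere. Reading this through the Gysin sequence of the circle bundle $\partial B_y\to G_y$, the sphere condition becomes the requirement that $\cup e:H^{k-2}(G_y)\to H^k(G_y)$ be an isomorphism for $0<k<2n-1$; for a smooth projective $(n-1)$-fold this forces $H^{2i}(G_y)=\mathbb Q\,e^i$ for $0\leq i\leq n-1$ with vanishing odd cohomology, that is $H^{\bullet}(G_y)\cong H^{\bullet}(\mathbb P^{n-1})$, which is (ix); the converse is read off the same Gysin sequence.
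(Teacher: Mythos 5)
Your overall architecture is logically complete and genuinely different from the paper's on the central implications. Where you agree: your (i) $\Leftrightarrow$ (ii) chase is precisely Remark \ref{natural} plus Lemma \ref{due} (including the need to treat $k=1$ separately, since (\ref{es1}) is only derived for $k\geq 1$), your (vi) $\Rightarrow$ (iii) is exactly Remark \ref{dimuno}, and your (ix) argument is the paper's Gysin-sequence argument with the tubular neighbourhood of the divisor $G_y$ in $X$ replacing the paper's deformation to the normal cone. Where you diverge: the paper runs (ii) $\Rightarrow$ (iv) by explicit bookkeeping ($\ker\theta_k\cong H^k(G)\cong H_{2n-k}(G)$, i.e. (\ref{poincare}), plus Corollary \ref{ic}), reaches (v) through (vii), obtains (viii) by substituting $IC^{\bullet}_Y\cong\mathbb Q_Y[n]$ into the splitting of Theorem \ref{main1}, and closes the loop cheaply via Remark \ref{unicity}, (i) ($\deg\eta\neq 0$, normalize). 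You instead prove (i) $\Rightarrow$ (viii) directly by an obstruction computation in $D^b(Y)$ and (viii) $\Rightarrow$ (v) by matching against the unconditional Theorem \ref{DecTh}. The direct splitting is attractive (it bypasses (\ref{poincare}) and Corollary \ref{ic} entirely), but your (viii) $\Rightarrow$ (v) imports the full BBD theorem — which is exactly what this paper is re-proving — and your ``unique non-skyscraper summand'' step is not free: it needs Krull--Schmidt in $D^b_c(Y)$, indecomposability of $\mathbb Q_Y$ and of $IC^{\bullet}_Y$ (both have endomorphism ring $\mathbb Q$), and a support argument; the self-duality Remarks \ref{hself}, \ref{jself} do not substitute for this. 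The cheaper fix is the paper's: (viii) $\Rightarrow$ (ii) via Remark \ref{unicity}, then (ii) $\Rightarrow$ (iv) as in the paper.

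Two genuine gaps in your (i) $\Rightarrow$ (viii). First, you assert $\mathcal H^0(\mathcal J^{\bullet})=\mathbb Q_Y$, but $\mathcal H^0(R\pi_*\mathbb Q_X)=\pi_*\mathbb Q_X$ has stalk $H^0(G_y)$ at $y\in\text{Sing}(Y)$, and equals $\mathbb Q_Y$ only when every fibre is connected. This is not automatic: it is precisely the content of the $k=1$ case of (i) (for the normalization of a nodal curve the map $H^0(G)\to H^1(Y,U)$ is nonzero and $\mathbb Q_Y$ is not a direct summand of $\pi_*\mathbb Q_X$). You must first show that the vanishing at $k=1$ forces connectivity of each $G_y$ — e.g.\ that the image of $H^0(D)\to H^0(\partial D)$ then consists of constants on each $\partial B_y$ — before the cone $\mathcal C$ of $\mathbb Q_Y\to\mathcal J^{\bullet}$ has the shape you use. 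Second, your obstruction bookkeeping is misstated: Ext-groups \emph{between} skyscrapers at points vanish in positive degrees (adjunction for a closed point over a field), so $\mathcal C$, being supported on $\text{Sing}(Y)$, is automatically formal, $\mathcal C\cong\bigoplus_{k\geq 1}R^k\pi_*\mathbb Q_X[-k]$, with no tower needed; and Lemma \ref{sec}, which concerns strict exact sequences of injective complexes with a zero-differential subcomplex, is not the tool that applies to truncation triangles. The only obstruction is the connecting morphism $\delta\in Hom_{D^b(Y)}(\mathcal C,\mathbb Q_Y[1])\cong\bigoplus_{k,y}Hom\bigl(H^k(G_y),H^{k+1}(Y,Y\backslash\{y\})\bigr)$ — the relevant Exts go from the skyscrapers \emph{into} $\mathbb Q_Y$ and are computed by $i_y^!\mathbb Q_Y$, i.e.\ by $H^{\bullet}(B_y,\partial B_y)$, as you say. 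But your claim that the components of $\delta$ ``are precisely'' the maps of (i) is the crux and is only asserted: it requires applying $i_y^!$ to the triangle $\mathbb Q_Y\to R\pi_*\mathbb Q_X\to\mathcal C\to$, using properness of $\pi$ (base change) to identify $\mathbb H^k(i_y^!R\pi_*\mathbb Q_X)\cong H^k(D_y,\partial D_y)$, and checking that the resulting long exact sequence is (\ref{es1}) localized at $y$. With those two repairs your route to (\ref{eldec}) does go through.
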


\begin{remark}
\label{unicity} (i) Projecting onto $\mathbb Q_Y$, from the
decomposition  (\ref{eldec}) we obtain a bivariant class $\eta \in
Hom_{D^b(Y)}(R\pi_*\mathbb Q_X, \mathbb Q_Y)$, whose induced Gysin
morphisms $\eta_k:H^k(X)\to H^k(Y)$ are surjective. In particular
$\deg\eta\neq 0$. By Remark \ref{dimuno} it follows  that
$\frac{1}{\deg\eta}\eta$ is the unique natural bivariant class.

\smallskip
(ii) The natural morphism $\theta_{\bullet}:H^{\bullet}(X)\to
H^{\bullet}(Y)$ is unique and identifies with the push-forward via
Poincar\'e Duality:
$$
H^{\bullet}(X)\cong H_{2n-\bullet}(X)\to H_{2n-\bullet}(Y)\cong
H^{\bullet}(Y).
$$
In fact by Remark \ref{triangolo} we know that, for $k<2n-1$, the
restriction map $\alpha^*_k:H^k(Y)\to H^k(U)$ is nothing but the
duality (iso)morphism because $H^k(U)\cong H_{2n-k}(Y)$. Therefore
$\theta_k={(\alpha^*_k)}^{-1}\circ \beta^*_k$. The case $k=2n-1$
is similar because $H_1(Y)\subseteq H^{2n-1}(U)$ (again compare
with Remark \ref{triangolo}).
\end{remark}

\section{Preliminaries}

\begin{lemma}\label{uno} The following sequences
are exact:
$$
0\to H^k(Y)\stackrel {\pi^*_k}\to H^k(X) \stackrel {i^*_k}\to
H^k(G)\to 0 \quad {\text{for any $k>n$,}}
$$
$$
H^n(Y)\stackrel {\pi^*_n}\to H^n(X) \stackrel {i^*_n}\to H^n(G)\to
0,
$$
$$
0\to H_{2n-k}(G)\to
H^k(X)\stackrel{\beta^*_k}\to H^k(U)\to 0 \quad
{\text{for any $k<n$.}}
$$
\end{lemma}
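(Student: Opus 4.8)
The plan is to read all three sequences off the two pair long exact sequences (\ref{es1}) and (\ref{es2}) already established, the only substantial ingredient being the vanishing (\ref{top_hyp}) (condition (ii) of Theorem \ref{main1}). Write $a_k:H^k(Y,U)\to H^k(Y)$ and $b_k:H^k(X,U)\to H^k(X)$ for the natural maps of the pairs, and recall that under $H^k(X,U)\cong H_{2n-k}(G)$ and Poincar\'e duality on $X$ the map $b_k$ is the Gysin push-forward $i_*:H_{2n-k}(G)\to H_{2n-k}(X)$. I would extract two consequences of (\ref{top_hyp}). First, the composite $i^*_k\circ b_k$ coincides with the map $H^k(X,U)\to H^k(G)$ of (\ref{es1}), and the arrow following it, $H^k(G)\to H^{k+1}(Y,U)$, is the map (\ref{top_hyp}) in degree $k+1$, hence vanishes for $k\ge n$; so $i^*_k\circ b_k$, and a fortiori $i^*_k:H^k(X)\to H^k(G)$, is surjective for every $k\ge n$. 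By the universal-coefficient duality $H_j(\cdot)\cong H^j(\cdot)^\vee$ the transpose of this surjection is the injection $i_*:H_j(G)\to H_j(X)$, valid for $j\ge n$. Second, the same piece of (\ref{es1}) gives $\ker\gamma^*_k=0$, i.e. $\gamma^*_k:H^k(Y,U)\to H^k(X,U)$ is injective for $k>n$. Finally $i^*_k\circ\pi^*_k=(\pi\circ i)^*_k=0$ for $k>0$, since $\pi\circ i$ factors through the finite set $\mathrm{Sing}(Y)$.

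For the first two sequences I work on the ladder mapping the long exact sequence of $(Y,U)$ to that of $(X,U)$ via $\pi$ (the identity on $H^\bullet(U)$, $\gamma^*$ on relative terms, $\pi^*$ on total terms). Surjectivity of $i^*_k$ for $k\ge n$ is the first consequence. For middle exactness $\ker i^*_k=\mathrm{Im}\,\pi^*_k$ (range $k\ge n$): if $i^*_kx=0$, then $\delta_X\circ\beta^*=0$ together with $\delta_X=\gamma^*_{k+1}\circ\delta_Y$ and injectivity of $\gamma^*_{k+1}$ force $\delta_Y(\beta^*_kx)=0$, so $\beta^*_kx=\alpha^*_ky$ for some $y$; then $\beta^*_k(x-\pi^*_ky)=0$ by (\ref{t}), whence $x-\pi^*_ky=b_k(z)$; applying $i^*_k$ gives $L_k(z)=0$, where $L_k=i^*_k\circ b_k$ is the map $H^k(D,\partial D)\to H^k(D)$ whose kernel is $\mathrm{Im}\,\gamma^*_k$; writing $z=\gamma^*_k(w)$ and using $b_k\circ\gamma^*_k=\pi^*_k\circ a_k$ yields $b_k(z)=\pi^*_k(a_kw)$, so $x\in\mathrm{Im}\,\pi^*_k$. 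For injectivity of $\pi^*_k$ when $k>n$: if $\pi^*_ky=0$ then $\alpha^*_ky=0$ by (\ref{t}), so $y=a_k(w)$, and $0=\pi^*_ka_k(w)=b_k\gamma^*_k(w)$ places $\gamma^*_k(w)=\delta_X(u)=\gamma^*_k\delta_Y(u)$; injectivity of $\gamma^*_k$ (here $k>n$) gives $w=\delta_Y(u)$, hence $y=a_k\delta_Y(u)=0$. This yields the first sequence for $k>n$ and the second for $k=n$ (where injectivity of $\pi^*_n$ is correctly absent, since $\gamma^*_n$ need not be injective).

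The third sequence I would read directly off (\ref{es2}): short exactness at $H^k(X)$ for $k<n$ amounts to the vanishing of the two connecting maps bounding $H^k(X)$, i.e. to injectivity of $b_k$ and of $b_{k+1}$. These are the Gysin maps $i_*$ in homological degrees $2n-k>n$ and $2n-k-1\ge n$, both injective by the first consequence above; so $H_{2n-k}(G)\hookrightarrow H^k(X)$, the image equals $\ker\beta^*_k$ by exactness, and $\beta^*_k$ is onto.

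The one genuinely non-formal step is the surjectivity of $i^*_k$ for $k\ge n$, equivalently the injectivity of the Gysin push-forward $i_*:H_j(G)\to H_j(X)$ for $j\ge n$; this is precisely where (\ref{top_hyp}) is indispensable and is the crux of the lemma. In the lowest interesting case $n=2$, $j=2$ it is exactly the linear independence of the exceptional curves in $H_2(X)$, i.e. Mumford's negative-definiteness theorem, and the general statement is its higher-dimensional counterpart encoded by (\ref{top_hyp}). Once this is granted, all the rest is the purely formal diagram-chasing sketched above.
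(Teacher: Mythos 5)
There is a genuine gap, and it sits exactly at the point you yourself identify as the crux. You derive the surjectivity of $i^*_k:H^k(X)\to H^k(G)$ for $k\geq n$ from the vanishing (\ref{top_hyp}), treating that vanishing as an available hypothesis. But Lemma \ref{uno} has no hypotheses: it is an unconditional statement, and it must be, because of how the paper uses it. Within the proof of Theorem \ref{main1} the vanishing (ii) is indeed assumed, so there your argument would survive; but Lemma \ref{uno} is also invoked where (\ref{top_hyp}) is not known and is in fact the thing being established. In Remark \ref{rmain1}, (i), the paper uses Lemma \ref{uno} (the inclusion $H_n(G)\subseteq H_n(X)$ via push-forward, dual to the surjectivity of $i^*_n$) precisely in order to \emph{verify} the vanishing $H^n(G)\to H^{n+1}(Y,U)$ for normal surfaces via Mumford's theorem; your conditional proof would make that argument circular. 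Similarly, the first part of Corollary \ref{ic} is asserted unconditionally from Lemma \ref{uno}. Relatedly, your closing claim that in the case $n=2$, $j=2$ the injectivity of $i_*:H_2(G)\to H_2(X)$ \emph{is} Mumford's negative-definiteness theorem conflates two different statements: linear independence of the exceptional classes is the unconditional content of Lemma \ref{uno}, while what Mumford's theorem supplies is the strictly stronger non-degeneracy of the intersection form on $H_n(G)$, which by Remark \ref{rmain1}, (i) is what is equivalent to the vanishing (\ref{top_hyp}) in the critical degree.

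The missing idea is that the surjectivity of $i^*_k$ for $k\geq n$ has an unconditional proof from algebraic geometry, and this is the actual content of the paper's argument. One takes a general hyperplane section $L$ of $Y$, sets $Y_0:=Y\setminus L$ and $X_0:=\pi^{-1}(Y_0)$; by Deligne's theorem on mixed Hodge structures \cite[Proposition 4.23]{Voisin} the maps $H^k(X)\to H^k(G)$ and $H^k(X_0)\to H^k(G)$ have the same image, and since $Y_0$ is affine, Artin vanishing (stratified Morse theory, \cite{GM}) gives $H^{k+1}(Y_0)=0$ for $k\geq n$, so the long exact sequence of the pair forces $H^k(X_0)\to H^k(G)$, hence $i^*_k$, to be onto for $k\geq n$. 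Once this input is in place, your diagram-chasing is fine but also more laborious than necessary: the paper works with the single long exact sequence (\ref{es4}) of the pair $(X,G)$, using $H^k(X,G)\cong H^k(Y,\mathrm{Sing}(Y))\cong H^k(Y)$ for $k\geq 1$, so that surjectivity of $i^*_k$ for $k\geq n$ immediately yields both the middle exactness and the injectivity of $\pi^*_k$ for $k>n$, with no appeal to injectivity of $\gamma^*_k$ (which, note, you also obtained only from (\ref{top_hyp})). Your treatment of the third sequence via (\ref{es2}) and dualization is essentially the paper's, read in the dual direction (the paper deduces surjectivity of $\beta^*_k$ for $k<n$ from injectivity of $\pi^*_{2n-k}$ and Lefschetz duality); that part is correct once the first two sequences are established unconditionally.
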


\begin{proof} By \cite[p. 84, $6^*$]{Dold} we know that $H^{k}(Y,\text{Sing}(Y))\cong
H^{k}(X,G)$ for any $k$. Since $\text{Sing}(Y)$ is finite, we also
have $H^{k}(Y,\text{Sing}(Y))\cong H^k(Y)$ for $k\geq 1$.
Therefore the long exact sequence of the pair:
$$
{\dots} \to H^{k}(X,G)\stackrel
{}{\to}H^{k}(X)\stackrel{i^*_k}{\to} H^{k}(G)\stackrel
{}{\to}H^{k+1}(X,G)\to \dots
$$
identifies, when $k\geq 1$, with the long exact sequence:
\begin{equation}\label{es4}
{\dots} \to H^{k}(Y)\stackrel
{\pi^*_k}{\to}H^{k}(X)\stackrel{i^*_k}{\to} H^{k}(G)\stackrel
{}{\to}H^{k+1}(Y)\to \dots.
\end{equation}
In order to prove that the first two sequences are exact, it
suffices to prove that $i^*_k$ is surjective for any $k\geq n$. To
this aim let $L$ be a general hyperplane section of $Y$, and put
$Y_0:=Y\backslash L$, and $X_0:=\pi^{-1}(Y_0)$. As before, we have
a long exact sequence:
$$
{\dots} \to H^{k}(Y_0)\stackrel {}{\to}H^{k}(X_0)\stackrel{}{\to}
H^{k}(G)\stackrel {}{\to}H^{k+1}(Y_0)\to \dots
$$
and by the Deligne's Theorem \cite[Proposition 4.23]{Voisin} we know
that the pull-back maps $H^{k}(X)\stackrel{i^*_k}{\to} H^{k}(G)$ and
$H^{k}(X_0)\stackrel{}{\to} H^{k}(G)$ have the same image. Then we
are done, because $Y_0$ is affine, therefore $H^{k+1}(Y_0)=0$ for
any $k\geq n$ by stratified Morse Theory \cite[p. 23-24]{GM}.

In order to examine the last sequence, assume $k<n$. Then
$2n-k>n$, and we just proved that the pull-back
$H^{2n-k}(X,G)\cong H^{2n-k}(Y)\to H^{2n-k}(X)$ is injective. By
Poincar\'e Duality Theorem and Lefschetz Duality Theorem \cite[p.
297]{Spanier} we have $H^{2n-k}(X)\cong H_{k}(X)$ and
$H^{2n-k}(X,G)\cong H_k(U)$. Therefore the push-forward $H_k(U)\to
H_k(X)$ is injective, hence the restriction $H^k(X)\to H^k(U)$ is
onto for any $k<n$. Now our assertion follows from (\ref{es2}).
\end{proof}

\begin{lemma}\label{due}
Fix an integer $k$, and let $\gamma^*_k:H^k(Y,U)\to H^k(X,U)$ be
the pull-back. Assume that $\pi^*_k:H^k(Y)\to H^k(X)$ is
injective. Then the following properties are equivalent.

\smallskip
(i) $\gamma^*_k$ is injective;

\smallskip
(ii) $\Im(\alpha^*_{k-1})=\Im(\beta^*_{k-1})$;

\smallskip
(iii) $H^{k-1}(G)\to H^{k}(Y,U)$ is the zero map.
\end{lemma}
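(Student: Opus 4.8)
The plan is to compare the long exact cohomology sequences of the pairs $(Y,U)$ and $(X,U)$, which are linked by pull-back into a commutative ladder. The essential point is that, since $\pi$ restricts to the identity on $U$, the vertical map between the two copies of $H^{k-1}(U)$ occurring in these sequences is the identity, while the three remaining verticals are $\pi^*_{k-1}$, $\gamma^*_k$ and $\pi^*_k$. Concretely I would write
$$
\begin{array}{ccccccc}
H^{k-1}(Y) & \stackrel{\alpha^*_{k-1}}{\longrightarrow} & H^{k-1}(U) & \stackrel{\delta_Y}{\longrightarrow} & H^k(Y,U) & \stackrel{j_Y}{\longrightarrow} & H^k(Y) \\
\downarrow & & \| & & \downarrow & & \downarrow \\
H^{k-1}(X) & \stackrel{\beta^*_{k-1}}{\longrightarrow} & H^{k-1}(U) & \stackrel{\delta_X}{\longrightarrow} & H^k(X,U) & \stackrel{j_X}{\longrightarrow} & H^k(X)
\end{array}
$$
where the left vertical is $\pi^*_{k-1}$, the central one the identity, and the two right verticals are $\gamma^*_k$ and $\pi^*_k$.

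First I would dispose of the equivalence (i) $\iff$ (iii), which costs essentially nothing. The exact sequence (\ref{es1}), shifted one step, reads $H^{k-1}(X,U)\to H^{k-1}(G)\to H^k(Y,U)\stackrel{\gamma^*_k}{\to}H^k(X,U)$, so by exactness the image of $H^{k-1}(G)\to H^k(Y,U)$ is precisely $\ker\gamma^*_k$. Hence this map is zero exactly when $\gamma^*_k$ is injective.

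It then remains to prove (i) $\iff$ (ii), a direct diagram chase on the ladder, using the standing hypothesis that $\pi^*_k$ is injective together with the fact (Remark \ref{triangolo}, coming from the triangle (\ref{t})) that one always has $\Im(\alpha^*_{k-1})\subseteq\Im(\beta^*_{k-1})$; thus (ii) is equivalent to the reverse inclusion. For (i) $\implies$ (ii), given $u=\beta^*_{k-1}(\xi)\in\Im(\beta^*_{k-1})$, commutativity yields $\gamma^*_k(\delta_Y(u))=\delta_X(u)=0$, so injectivity of $\gamma^*_k$ forces $\delta_Y(u)=0$, whence $u\in\Im(\alpha^*_{k-1})$ by exactness of the top row. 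For (ii) $\implies$ (i), take $x\in\ker\gamma^*_k$; then $\pi^*_k(j_Y(x))=j_X(\gamma^*_k(x))=0$, so $j_Y(x)=0$ by injectivity of $\pi^*_k$, and $x=\delta_Y(u)$ for some $u\in H^{k-1}(U)$; since $\delta_X(u)=\gamma^*_k(x)=0$ we get $u\in\Im(\beta^*_{k-1})=\Im(\alpha^*_{k-1})$ by (ii), and exactness of the top row gives $x=\delta_Y(u)=0$.

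I do not expect a genuine obstacle here: once the ladder is assembled correctly the argument is pure bookkeeping. The only two points requiring care are, first, confirming that the central vertical arrow is literally the identity — this is what lets $\delta_Y$ and $\delta_X$ commute with $\gamma^*_k$ through a single copy of $H^{k-1}(U)$ — and second, tracking exactly where each hypothesis enters: injectivity of $\gamma^*_k$ is used in the direction (i) $\implies$ (ii), and injectivity of $\pi^*_k$ in the direction (ii) $\implies$ (i). Both hypotheses are genuinely needed, so the assumption on $\pi^*_k$ cannot be dropped.
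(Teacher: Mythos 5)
Your proof is correct and takes essentially the same route as the paper: the identical commutative ladder between the long exact sequences of the pairs $(Y,U)$ and $(X,U)$ with the identity on $H^{k-1}(U)$, the same diagram chase for (i) $\iff$ (ii) with injectivity of $\pi^*_k$ entering exactly in the direction (ii) $\implies$ (i), and the same appeal to the exact sequence (\ref{es1}) for (i) $\iff$ (iii). The only cosmetic difference is that you prove the inclusion $\Im(\beta^*_{k-1})\subseteq\Im(\alpha^*_{k-1})$ directly (citing Remark \ref{triangolo} for the other one), whereas the paper gets both inclusions at once by identifying $\Im(\alpha^*_{k-1})$ and $\Im(\beta^*_{k-1})$ with the kernels of the two connecting maps out of $H^{k-1}(U)$.
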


\begin{proof} Consider the natural commutative diagram with exact
rows:
$$
\begin{array}{ccccccc}
H^{k-1}(X)&\stackrel
{\beta^*_{k-1}}{\longrightarrow}&H^{k-1}(U)&\stackrel{}{\longrightarrow}
&H^{k}(X,U)&\stackrel {}{\longrightarrow}&H^{k}(X)\\
\stackrel {\pi^*_{k-1}}{}\uparrow & &\stackrel {}{}\Vert & & \stackrel {\gamma^*_k}{}\uparrow& &
\stackrel {\pi^*_k}{}\uparrow \\
H^{k-1}(Y)&\stackrel
{\alpha^*_{k-1}}{\longrightarrow}&H^{k-1}(U)&\stackrel{}{\longrightarrow}
&H^{k}(Y,U)&\stackrel {}{\longrightarrow}&H^{k}(Y).\\
\end{array}
$$
If $\gamma^*_k$ is injective then
$$
\ker(H^{k-1}(U)\to H^k(X,U))=\ker(H^{k-1}(U)\to H^k(Y,U)).
$$
It follows that $\Im(\alpha^*_{k-1})=\Im(\beta^*_{k-1})$ because
$\Im(\alpha^*_{k-1})=\ker(H^{k-1}(U)\to H^k(Y,U))$ and
$\Im(\beta^*_{k-1})=\ker(H^{k-1}(U)\to H^k(X,U))$. Conversely,
assume that $\Im(\alpha^*_{k-1})=\Im(\beta^*_{k-1})$, and fix any
$c\in \ker \gamma^*_k$. Since $\pi^*_k$ is injective, there exists
some $c'\in H^{k-1}(U)$ which maps to $c$ via $H^{k-1}(U)\to
H^k(Y,U)$. Since $c\in \ker \gamma^*_k$, a fortiori $c'$ belongs
to  $\Im(\beta^*_{k-1})$. Hence $c'\in \Im(\alpha^*_{k-1})$,
therefore $c=0$. The equivalence of (i) with (iii) follows from
(\ref{es1}).
\end{proof}

%\begin{remark}\label{lci2}
%Combining Remark \ref{lci} with Lemma \ref{uno} and Lemma \ref{due}, we
%see that, as a consequence of the connectivity of the link, if $Y$ is locally complete intersection then the map
%$H^{k-1}(G)\to H^{k}(Y,U)$ vanishes for any $k\geq n+2$. Therefore in this case, in order
%to deduce the decomposition (i) in Theorem \ref{main1}, one only has to check
%that the map $H^{n}(G)\to H^{n+1}(Y,U)$ is the zero map.
%\end{remark}

\begin{corollary}\label{hl}
Let $H_{k}(G)\to H^{2n-k}(G)$ be the map obtained
composing the map $H_{k}(G)\to H^{2n-k}(X)$ with the
pull-back $H^{2n-k}(X)\to H^{2n-k}(G)$. Assume $k\geq
n$ and that  $\Im(\alpha^*_{k})=\Im(\beta^*_{k})$. Then the map
$H_{k}(G)\to H^{2n-k}(G)$ is injective.
\end{corollary}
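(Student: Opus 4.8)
The plan is to recognise the composite $\Phi_k\colon H_k(G)\to H^{2n-k}(G)$ as a restriction map on the tubular neighbourhood $D=\pi^{-1}(B)$ of $G$, and to deduce its injectivity from Poincar\'e--Lefschetz duality on the manifold-with-boundary $(D,\partial D)$. Put $j:=2n-k\le n$. Combining the homotopy equivalence $G\simeq D$, the excision isomorphism $H^m(X,U)\cong H^m(D,\partial D)$ of Notations (iv), and Lefschetz duality $H_k(D)\cong H^{2n-k}(D,\partial D)$ of Notations (v), one sees that $\Phi_k$ fits, up to sign, into a commutative square with vertical isomorphisms lying over the natural \lq\lq forget the boundary\rq\rq\ map
\[
\psi_j\colon H^j(D,\partial D)\longrightarrow H^j(D).
\]
The only point to verify here is that the Poincar\'e dual in $X$ of a class carried by $G$ restricts on $D$ to the image under $\psi_j$ of its Lefschetz dual in $(D,\partial D)$; this is the usual localisation of Poincar\'e duality to a neighbourhood. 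Consequently $\Phi_k$ is injective if and only if $\psi_j$ is injective. (For $k=n$ this is exactly the non-degeneracy of the intersection form on $H_n(G)\subset H_n(X)$ discussed in Remark \ref{rmain1}.)

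Next I would translate the hypothesis into a surjectivity statement in the complementary degree $k=2n-j$. Since $k\ge n$ we have $k+1>n$, so the first exact sequence of Lemma \ref{uno} shows that $\pi^*_{k+1}$ is injective. Lemma \ref{due}, applied with index $k+1$, then gives that $\Im(\alpha^*_k)=\Im(\beta^*_k)$ is equivalent to the vanishing of the map $H^k(G)\to H^{k+1}(Y,U)$. By exactness of (\ref{es1}), this vanishing is in turn equivalent to the surjectivity of the preceding map $H^k(X,U)\to H^k(G)$, which under the identifications above is precisely $\psi_k\colon H^k(D,\partial D)\to H^k(D)$. Thus the hypothesis is equivalent to $\psi_k$ being surjective.

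Finally comes the crucial duality step. On the compact oriented $2n$-manifold with boundary $(D,\partial D)$ the relative cup product furnishes perfect pairings $H^j(D,\partial D)\times H^{2n-j}(D)\to\bQ$ and $H^j(D)\times H^{2n-j}(D,\partial D)\to\bQ$. Compatibility of the relative cup product with restriction shows that, with respect to these pairings, the transpose of $\psi_j$ is $\psi_{2n-j}=\psi_k$; here one uses the dimension equality $\dim H^j(D,\partial D)=\dim H^{2n-j}(D)$ supplied by Lefschetz duality. Over a field a linear map is injective if and only if its transpose is surjective, so $\psi_j$ is injective precisely because $\psi_k$ is surjective, and hence $\Phi_k$ is injective. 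The main obstacle is exactly this last step: one must set up the relative cup-product pairings on $(D,\partial D)$ and check carefully that $\psi_j$ and $\psi_k$ are genuinely transpose to one another (equivalently, that the intersection form restricted to $G$ is, after the duality isomorphisms, the bilinear form whose left and right radicals are $\ker\psi_j$ and $\ker\psi_k$). Once this self-duality in complementary degrees is in place, the corollary follows immediately, while the identification of $\Phi_k$ with $\psi_j$ in the first paragraph, though routine, also requires some care with the localisation of Poincar\'e duality.
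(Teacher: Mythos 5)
Your proof is correct and takes essentially the same route as the paper: Lemma \ref{uno} (at index $k+1$), Lemma \ref{due}, and exactness of (\ref{es1}) yield surjectivity of $H^k(X,U)\to H^k(G)$, and then $\mathbb Q$-duality via Lefschetz duality on $(D,\partial D)$ converts this into injectivity of $H_k(G)\to H^{2n-k}(G)$. The paper merely compresses your explicit cup-product/transpose discussion into the one-line chain $H_{k}(X,U)\cong H_{k}(D,\partial D)\cong H^{2n-k}(D)\cong H^{2n-k}(G)$, leaving implicit the compatibility checks you rightly flag.
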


\begin{proof} By Lemma \ref{uno}, Lemma \ref{due}, and  (\ref{es1}),
we deduce that the map $H^{k}(X,U)\to H^{k}(G)$ is
onto. Dualizing we get an injective map $H_{k}(G) \to
H_{k}(X,U)$. We are done because by excision and Lefschetz Duality
Theorem \cite[p. 298]{Spanier} we have $ H_{k}(X,U)\cong
H_{k}(D,\partial D)\cong H^{2n-k}(D)\cong H^{2n-k}(G)$.
\end{proof}

\begin{corollary}\label{ic} We have:
$$
H^k(X)\cong \begin{cases} IH^k(Y)\oplus H^k(G) \quad{\text{if $k>n$}}\\
 IH^k(Y)\oplus H_{2n-k}(G) \quad{\text{if $k<n$.}}
\end{cases}
$$
Moreover if $\Im(\alpha^*_{n})=\Im(\beta^*_{n})$ then
$$
H^n(X)\cong IH^n(Y)\oplus H^n(G).
$$
\end{corollary}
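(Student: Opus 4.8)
The plan is to treat the three ranges $k>n$, $k<n$, and $k=n$ separately, using Lemma \ref{uno} as the main engine together with the description of $IH^{\bullet}(Y)$ in (\ref{cic}). For $k>n$ and $k<n$ the statement falls out almost immediately, while the boundary case $k=n$ is where the hypothesis $\Im(\alpha^*_n)=\Im(\beta^*_n)$ and the injectivity supplied by Corollary \ref{hl} genuinely enter.

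First, for $k>n$, Lemma \ref{uno} provides the short exact sequence $0\to H^k(Y)\stackrel{\pi^*_k}\to H^k(X)\stackrel{i^*_k}\to H^k(G)\to 0$. Since every object here is a $\bQ$-vector space, the sequence splits, and I obtain $H^k(X)\cong H^k(Y)\oplus H^k(G)$. Recalling from (\ref{cic}) that $IH^k(Y)\cong H^k(Y)$ in this range, this is exactly the first assertion. Symmetrically, for $k<n$, the third sequence of Lemma \ref{uno}, namely $0\to H_{2n-k}(G)\to H^k(X)\stackrel{\beta^*_k}\to H^k(U)\to 0$, splits and gives $H^k(X)\cong H^k(U)\oplus H_{2n-k}(G)$; since $IH^k(Y)\cong H^k(U)$ for $k<n$ by (\ref{cic}), this yields the second assertion.

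The delicate point, and the real content of the corollary, is the case $k=n$ under the extra hypothesis $\Im(\alpha^*_n)=\Im(\beta^*_n)$. Here Lemma \ref{uno} only gives the right-exact sequence $H^n(Y)\stackrel{\pi^*_n}\to H^n(X)\stackrel{i^*_n}\to H^n(G)\to 0$, so that, choosing a splitting of the surjection $i^*_n$, one has $H^n(X)\cong \ker(i^*_n)\oplus H^n(G)=\Im(\pi^*_n)\oplus H^n(G)$. It then remains to identify $\Im(\pi^*_n)$ with $IH^n(Y)=\Im(\alpha^*_n)$. From the commutativity $\alpha^*_n=\beta^*_n\circ\pi^*_n$ of diagram (\ref{t}), the restriction of $\beta^*_n$ to $\Im(\pi^*_n)$ has image exactly $\Im(\alpha^*_n)$; the plan is to show that this restriction is moreover injective, which forces $\Im(\pi^*_n)\cong\Im(\alpha^*_n)=IH^n(Y)$ and completes the proof.

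For the injectivity I would argue as follows. The exact sequence (\ref{es2}) at level $n$ identifies $\ker(\beta^*_n)$ with the image of the map $\rho\colon H_n(G)\to H^n(X)$. If $x\in\Im(\pi^*_n)\cap\ker(\beta^*_n)$, write $x=\rho(w)$ with $w\in H_n(G)$; since $x\in\Im(\pi^*_n)=\ker(i^*_n)$ we get $i^*_n(\rho(w))=0$. But the composite $i^*_n\circ\rho\colon H_n(G)\to H^n(G)$ is precisely the map shown to be injective in Corollary \ref{hl} (applied with $k=n$, which is legitimate since $k\geq n$ and $\Im(\alpha^*_n)=\Im(\beta^*_n)$ are assumed). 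Hence $w=0$, so $x=0$, proving $\Im(\pi^*_n)\cap\ker(\beta^*_n)=0$. The main obstacle is exactly this step: without the hypothesis $\Im(\alpha^*_n)=\Im(\beta^*_n)$ one cannot invoke Corollary \ref{hl}, and $\pi^*_n$ need not even be injective in degree $n$, so the clean splitting $H^n(X)\cong IH^n(Y)\oplus H^n(G)$ can genuinely fail.
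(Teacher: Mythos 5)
Your proof is correct and takes essentially the same route as the paper: Lemma \ref{uno} together with (\ref{cic}) disposes of $k\neq n$, and at $k=n$ both arguments rest on the identification $\ker\beta^*_n=\Im(H^n(X,U)\to H^n(X))$ and on the injectivity of the composite $H_n(G)\to H^n(X)\to H^n(G)$ supplied by Corollary \ref{hl} under the hypothesis $\Im(\alpha^*_n)=\Im(\beta^*_n)$. The only difference is cosmetic bookkeeping: the paper exhibits a subspace $P\subseteq\Im(\pi^*_n)$ mapped isomorphically onto $IH^n(Y)$ by $\beta^*_n$ and identifies the complement $\ker\beta^*_n$ with $H^n(G)$ (via $H^n(X,U)\cong H_n(G)\cong H^n(G)$), whereas you split along the surjection $i^*_n$ and identify $\ker i^*_n=\Im(\pi^*_n)$ with $IH^n(Y)$ --- a dual packaging of the same argument that incidentally spares you the universal-coefficient step $H_n(G)\cong H^n(G)$.
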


\begin{proof} In view of Lemma \ref{uno} we only have to examine
the case $k=n$. Since $\beta^*_n\circ \pi^*_n=\gamma^*_n$, there
exists a subspace $P\subseteq \Im(\pi^*_n)\subseteq H^n(X)$ which
is mapped isomorphically to
$\Im(\beta^*_{n})=\Im(\alpha^*_{n})=IH^n(Y)$ via $\beta^*_n$. In
particular $P\cap \ker \beta^*_n=\{0\}$, and so
$H^n(X)=IH^n(Y)\oplus \ker \beta^*_n$. On the other hand $\ker
\beta^*_n=\Im(H^n(X,U)\to H^n(X))$. By Corollary \ref{hl} we know
that the map $H^n(X,U)\to H^n(X)$ is injective because so is the
composite $H^n(X,U)\cong H_{n}(G)\to
H^{n}(X)\to H^{n}(G)$. Therefore $\ker
\beta^*_n=\Im(H^n(X,U)\to H^n(X))\cong H^n(X,U)\cong H_{n}(G)\cong
H^n(G)$.
\end{proof}

\begin{lemma}\label{inc} Assume that $\Im(\alpha^*_{k})=\Im(\beta^*_{k})$
for any $k\geq n$. Then there is an injective map of complexes
$$
0\to \mathcal H^{\bullet}\to \mathcal J^{\bullet}.
$$
\end{lemma}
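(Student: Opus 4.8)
The plan is to produce the required monomorphism degree by degree, exploiting that $\mathcal{H}^{\bullet}$ has vanishing differentials. Since a chain map out of a complex with zero differential is nothing but a family of sheaf maps $f^k:\mathcal{H}^k\to\mathcal{J}^k$ whose images are cocycles (i.e. $d^k_{\mathcal{J}^{\bullet}}\circ f^k=0$), and since any lift of an \emph{injective} map on cohomology is automatically injective, it suffices to realize, for each $k$, a sheaf map $f^k:\mathcal{H}^k\to Z^k(\mathcal{J}^{\bullet})$ inducing on $H^k$ a monomorphism $\mathcal{H}^k\hookrightarrow H^k(\mathcal{J}^{\bullet})=R^k\pi_*\mathbb{Q}_X$. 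First I would record the cohomology sheaves of $\mathcal{J}^{\bullet}$: they are skyscrapers on $\mathrm{Sing}(Y)$ with stalk $H^k(G_y)$ for $k\geq 1$ (and $\mathbb{Q}_Y$ for $k=0$), so that $\mathcal{H}^k=R^k\pi_*\mathbb{Q}_X$ already for $k\geq n$, while for $k<n$ one has $\mathcal{H}^k_y=H_{2n-k}(G_y)$.

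Because $\mathcal{J}^{\bullet}$ is a complex of injectives, such honest chain maps are exactly the elements of $\mathrm{Hom}_{D^b(Y)}(\mathcal{H}^k[-k],\mathcal{J}^{\bullet})$. The key computation I would carry out is to evaluate this group by adjunction. Writing $\mathcal{H}^k=\bigoplus_{y}(i_y)_*\mathcal{H}^k_y$ and using the adjunction $((i_y)_*,i_y^!)$ together with proper base change and the smoothness of $X$ (equivalently the self-duality of $\mathcal{J}^{\bullet}[n]$ from Remark \ref{jself}, since $i_y^!R\pi_*\mathbb{Q}_X\cong R\Gamma(G_y,\omega_{G_y})[-2n]$, whose $k$-th cohomology is $H_{2n-k}(G_y)$), one finds
$$
\mathrm{Hom}_{D^b(Y)}(\mathcal{H}^k[-k],\mathcal{J}^{\bullet})\cong\bigoplus_{y\in\mathrm{Sing}(Y)}\mathrm{Hom}_{\mathbb{Q}}\big(\mathcal{H}^k_y,\,H_{2n-k}(G_y)\big),
$$
and that, under this identification, the functor $H^k(-)$ sends a family $(\lambda_y)$ to $(\nu_y\circ\lambda_y)$, where $\nu_y:H_{2n-k}(G_y)\to H^k(G_y)$ is the natural duality map, namely the stalk at $y$ of the map $H^k(X,U)\to H^k(G)$, which is the map of Corollary \ref{hl}.

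With this in hand the construction is immediate and splits into the two halves in which $\mathcal{H}^k$ is defined. For $k<n$ I take $\lambda_y=\mathrm{id}_{H_{2n-k}(G_y)}$: the induced map on $H^k$ is then $\nu_y$ itself, which is injective by Corollary \ref{hl} applied in degree $2n-k>n$ (legitimate since the hypothesis gives $\Im(\alpha^*_{2n-k})=\Im(\beta^*_{2n-k})$). For $k\geq n$ I instead need $\nu_y\circ\lambda_y=\mathrm{id}_{H^k(G_y)}$, i.e. a linear section of $\nu_y$; such a section exists precisely because $\nu_y$ is surjective, and this surjectivity is where the hypothesis enters: by Lemma \ref{due} the assumption $\Im(\alpha^*_k)=\Im(\beta^*_k)$ forces $\gamma^*_{k+1}$ to be injective (note $\pi^*_{k+1}$ is injective for $k+1>n$ by Lemma \ref{uno}), whence by (\ref{es1}) the map $H^k(X,U)\to H^k(G)$ is onto for $k\geq n$. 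In either case the resulting $\lambda=(\lambda_y)$ yields $f^k:\mathcal{H}^k\to Z^k(\mathcal{J}^{\bullet})$ inducing a monomorphism on $H^k$, hence injective; assembling over all $k$ (using $d_{\mathcal{H}^{\bullet}}=0$) gives the desired $0\to\mathcal{H}^{\bullet}\to\mathcal{J}^{\bullet}$.

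The step I expect to be the main obstacle is the identification in the second paragraph — both the adjunction/base-change evaluation of $\mathrm{Hom}_{D^b(Y)}(\mathcal{H}^k[-k],\mathcal{J}^{\bullet})$ and, above all, the verification that the induced map on cohomology sheaves is post-composition with the geometric duality map $\nu_y$ of Corollary \ref{hl}. Everything else is formal: the reduction to a stalkwise lifting problem, the automatic injectivity of a lift of a monomorphism, and the final assembly. I would remark that a naive stalkwise splitting of $Z^k\twoheadrightarrow H^k(\mathcal{J}^{\bullet})$ does \emph{not} suffice, since the obstruction to promoting such splittings to a genuine sheaf map into the cocycles is a nonzero local cohomology ($\mathrm{Ext}^1$) term; it is exactly this obstruction that the duality computation together with the hypothesis disposes of.
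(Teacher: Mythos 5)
Your argument is correct, but it takes a genuinely different route from the paper's. The paper works entirely with sections: by Lemma \ref{uno} every class in $H^k$ lifts to a cocycle $c\in\Gamma(\mathcal J^k)$, and the whole point of the proof is to correct $c$ by a coboundary (and, for $k\geq n$, by a section supported in $U$) so as to obtain a representative supported on $\mathrm{Sing}(Y)$; there the hypothesis enters through $\Im(\beta^*_k)\subseteq\Im(\alpha^*_k)$ together with the identifications $H^k(Y)\cong H^k(X,G)\cong H^k(X,\beta_!\mathbb Q_U)$, and flabbiness of the injective sheaves $\mathcal J^{k-1}$ is what makes the corrections possible; lifting a basis of $H^k$ in this way spans the desired monomorphism $\mathcal H^k\hookrightarrow\ker(\mathcal J^k\to\mathcal J^{k+1})$. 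You instead compute the entire group $\mathrm{Hom}_{D^b(Y)}(\mathcal H^k[-k],\mathcal J^{\bullet})$ by adjunction and base change for the proper map $\pi$, reducing the construction to linear algebra for the duality maps $\nu_y:H_{2n-k}(G_y)\to H^k(G_y)$, with the hypothesis entering exactly where the paper uses it in Corollary \ref{hl} (via Lemma \ref{uno}, Lemma \ref{due} and (\ref{es1})): surjectivity of $\nu_y$ for $k\geq n$, injectivity for $k<n$. The compatibility you flag as the main obstacle does hold and is the standard one: the counit of $((i_y)_*,i_y^!)$ realizes the map from cohomology supported at $y$ to the stalk, i.e. $H^k(i_y^!\mathcal J^{\bullet})\cong H^k(X,X\setminus G_y)\cong H_{2n-k}(G_y)\to H^k(G_y)$, which is precisely the $y$-component of the map $H^k(X,U)\to H^k(G)$ of (\ref{es1}) --- the very identification the paper itself exploits in Corollary \ref{hl}, Proposition \ref{tat} and Remark \ref{rmain1}. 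Your approach buys more than the statement: it proves Lemma \ref{inc} and Proposition \ref{tat} in one stroke (your $f^k$ induces on cohomology stalks the identity for $k\geq n$ and $\nu_y$ for $k<n$, injective in both ranges), and computing the full Hom-group clarifies exactly which inclusions can exist; the cost is the Verdier-duality formalism ($i_y^!R\pi_*\cong R\Gamma(G_y,\,\cdot\,)\circ i_{G_y}^!$, $\omega_{G_y}$, Borel--Moore homology), where the paper needs only flabbiness and elementary manipulation of sections. One small correction: honest chain maps $\mathcal H^k[-k]\to\mathcal J^{\bullet}$ are classified by $\mathrm{Hom}_{D^b(Y)}(\mathcal H^k[-k],\mathcal J^{\bullet})$ only up to homotopy; this is harmless, since the induced map on cohomology sheaves is homotopy-invariant and, as you note, any representative $f^k$ whose composite $\mathcal H^k\to\mathcal H^k(\mathcal J^{\bullet})$ is injective is itself injective.
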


\begin{proof} It is enough to prove that for any $k$ there is a
monomorphism of sheaves $\mathcal H^k \hookrightarrow \ker\,(
\mathcal J^k\to \mathcal J^{k+1})$.

First we examine the case $k\geq n$.

To this aim, set $\Gamma^{\bullet}:= \Gamma(\mathcal J^{\bullet})$
and denote by $d^k: \Gamma^k\to \Gamma^{k+1}$ the differential.
Then we have $H^k(X) = H^k(\Gamma^{\bullet})$. By Lemma \ref{uno}
any element $a$ of $H^k=H^k(G)$ can be lifted to an element $c \in
\ker\,d^k$. We claim that any $a\in H^k(G)$ can be lifted to an
element $b\in \ker\,d^k\subseteq \Gamma (\mathcal J^k)$ which is
supported on $\text{Sing}(Y)$. Proving this claim amounts to show
that any $a\in H^k(G)$ can be lifted to an element $b\in
\ker\,d^k\subset \Gamma (\mathcal J^k)=\Gamma (\mathcal I^k)$ such
that $b\mid_U=0\in \Gamma (\mathcal J^k\mid_U)$. But $c\mid_U$
projects to a cohomology class living in $\Im(H^k(X)\to H^k(U))$.
By our assumption we have
$$\Im(H^k(X)\stackrel{\beta^*_k}\to H^k(U))\subseteq
\Im(H^k(Y)\stackrel{\alpha^*_k}\to H^k(U)).
$$
Since
$$
H^k(Y)\cong H^k(Y, \text{Sing}(Y))\cong H^k(X,G)
$$
\cite[p. 84, $6^*$]{Dold}, we find
$$
\Im(H^k(Y)\stackrel{\alpha^*_k}\to H^k(U))=
\Im(H^k(X,G)\to H^k(U)).
$$
On the other hand we have
$$
H^k(X,G)\cong H^k(X,{\beta}_!\mathbb Q _U)$$ \cite[Theorem
12.1]{Br}, \cite[Remark 2.4.5, (ii)]{Dimca2}. By definition of
direct image with proper support \cite[\S 2.6]{Iversen},
\cite[Definition 2.3.21]{Dimca2}, the sheaf ${\beta}_!\mathbb Q_U$
identifies with the subsheaf of $\mathbb Q_X$ consisting of
sections with support contained in $U$. It follows there exists
$e_U\in \Gamma(\mathcal J^{k-1}\mid_U)$ and $ g\in \Gamma(\mathcal
J^k)$ supported in $U$ such that
$$c\mid_U-d^{k-1}(e_U)=g\mid_U.$$
Moreover  there exists $e\in \Gamma(\mathcal J^{k-1})$ with
$e\mid_U=e_U$, because $\mathcal J^{k-1}$ is injective (hence
flabby). We conclude that the section
$$c- g -d^{k-1}(e)\in \Gamma(\mathcal J^k)
$$
is supported on $\text{Sing}(Y)$. Our claim is proved because $g
+d^{k-1}(e)\in \Gamma(\mathcal J^k)$ vanishes in $H^k(G)$. To
conclude the proof in the case $k\geq n$, fix a basis $a_r\in
H^k=H^k(G)$ and lift any $a_r$ to a $b_r\in \ker\, d^k\subseteq
\Gamma (\mathcal J^k)$ as in the claim. We get an isomorphism
between $H^k(G)$ and a subspace of $\Gamma (\mathcal J^k)$
consisting of sections supported on $\text{Sing}(Y)$. We are done
because such an isomorphism projects to a monomorphism of sheaves
$\mathcal H^k \hookrightarrow \ker\,(J^k\to J^{k+1})$.

Now assume $k<n$.

By Lemma \ref{uno} any element $a$ of $H^k=H_{2n-k}(G)\subseteq
H^k(X)$ can be lifted to an element $c \in \ker\,d^k$. Since $a$
restricts to $0$ in $H^k(U)$, then there exists $e\in
\Gamma(\mathcal J^{k-1}\mid_U)$ such that  $c\mid_U=d^{k-1}_U(e)$.
Since $\mathcal J^{k-1}$ is flabby we may assume $e\in
\Gamma(\mathcal J^{k-1})$. Therefore $b:=c-d^{k-1}(e)\in
\Gamma(\mathcal J^{k})$ represents $a$ and is supported on
$\text{Sing}(Y)$. As in the case $k\geq n$, applying this argument
to a basis of $H^k=H_{2n-k}(G)$, we define a monomorphism of
sheaves $\mathcal H^k \hookrightarrow \ker\,(\mathcal J^k\to
\mathcal J^{k+1})$.
\end{proof}

With the same assumption as in Lemma \ref{inc}, let $\mathcal
K^{\bullet}$ be the cokernel of the  inclusion $0\to \mathcal
H^{\bullet}\to \mathcal J^{\bullet}$:
$$
0\to \mathcal H^{\bullet}\to \mathcal J^{\bullet}\to \mathcal
K^{\bullet}\to 0.
$$
All the sheaves of these complexes are injective. Previous
sequence gives rise to a long exact sequence of sheaf cohomology:
$$
\dots {\to}\, \mathcal H^k \to \mathcal H^k(J^{\bullet})\to
\mathcal H^k(K^{\bullet})\to \dots,
$$
and for any $k\geq 1$ these sheaves are supported on
$\text{Sing}(Y)$.

\begin{proposition}\label{tat} For any $k$ the sequence
$$
0 {\to}\,\, \mathcal H^k\to \mathcal H^k(J^{\bullet})\to \mathcal
H^k(K^{\bullet})\to 0
$$
is exact.
\end{proposition}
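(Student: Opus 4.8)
The statement to prove is that for every $k$ the short sequence
$$
0 \to \mathcal H^k \to \mathcal H^k(J^{\bullet}) \to \mathcal H^k(K^{\bullet}) \to 0
$$
is exact. We already have the long exact sequence of sheaf cohomology coming from the short exact sequence $0\to \mathcal H^{\bullet}\to \mathcal J^{\bullet}\to \mathcal K^{\bullet}\to 0$; since all differentials in $\mathcal H^{\bullet}$ vanish, $\mathcal H^k(\mathcal H^{\bullet})=\mathcal H^k$, so the long exact sequence reads
$$
\cdots \to \mathcal H^{k-1}(K^{\bullet}) \to \mathcal H^k \stackrel{u^k}{\to} \mathcal H^k(J^{\bullet}) \stackrel{v^k}{\to} \mathcal H^k(K^{\bullet}) \stackrel{\partial^k}{\to} \mathcal H^{k+1} \to \cdots.
$$
Exactness of the three-term sequence in the statement is therefore equivalent to the single assertion that the map $u^k:\mathcal H^k \to \mathcal H^k(\mathcal J^{\bullet})$ is injective for every $k$, equivalently that the connecting map $\partial^{k-1}:\mathcal H^{k-1}(\mathcal K^{\bullet}) \to \mathcal H^k$ is the zero map for every $k$. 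So the whole proposition collapses to proving injectivity of $u^k$ on cohomology sheaves.

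First I would reduce to a stalk computation at each $y\in\mathrm{Sing}(Y)$, since away from the singular locus $\mathcal H^{\bullet}$ vanishes and there is nothing to prove. At a singular point the stalk of $\mathcal H^k(\mathcal J^{\bullet})$ is $H^k(G_y)$ for $k\ge 1$ (Notation $(x)$), and the stalk of $\mathcal H^k$ is $H^k(G_y)$ for $k\ge n$ and $H_{2n-k}(G_y)$ for $k<n$ (Notation $(vi)$). The map $u^k$ is, by construction in Lemma \ref{inc}, exactly the inclusion of the lifted classes: each basis element $a_r\in \mathcal H^k$ was lifted to a section $b_r\in\ker d^k$ supported on $\mathrm{Sing}(Y)$ representing $a_r$, and $u^k$ sends $a_r$ to its class in $\mathcal H^k(\mathcal J^{\bullet})$. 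Thus injectivity of $u^k$ amounts to showing that the lifted cohomology classes remain linearly independent in the stalk $H^k(G_y)$, i.e. that a nonzero class in $\mathcal H^k$ does not become a coboundary after passing to $\mathcal J^{\bullet}$.

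For $k\ge n$ this is essentially immediate: by Lemma \ref{uno} the restriction $i^*_k:H^k(X)\to H^k(G)$ is surjective, and the classes $a_r$ form a basis of $H^k(G)$, so their lifts cannot map to zero in $H^k(G_y)$ without contradicting that they represent a basis. For $k<n$ the relevant identification is the injection $H_{2n-k}(G)\hookrightarrow H^k(X)$ of Lemma \ref{uno}, together with the fact that the lifted section $b=c-d^{k-1}(e)$ genuinely represents the class $a$ in $H^k(X)$ and is supported on $\mathrm{Sing}(Y)$; its image in the stalk cohomology recovers $a$, so $u^k$ is injective there as well. The cleanest way to phrase all of this uniformly is to observe that $u^k$ followed by the evaluation $\mathcal H^k(\mathcal J^{\bullet})_y = H^k(G_y)$ reproduces the tautological inclusion $H^k_y\hookrightarrow H^k(G_y)$ (an isomorphism for $k\ge n$, and the inclusion dual to Poincaré--Lefschetz duality for $k<n$), hence is injective.

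The main obstacle I anticipate is the case $k<n$: there the stalk $H^k(G_y)$ of $\mathcal J^{\bullet}$ is \emph{not} literally the stalk $H_{2n-k}(G_y)$ of $\mathcal H^k$, so injectivity of $u^k$ is not a formal triviality but really uses the duality isomorphisms. I would therefore spend most of the care verifying that the map $H_{2n-k}(G_y)\to H^k(G_y)$ induced by $u^k$ is the composite appearing in Corollary \ref{hl} (the map $H_k(G)\to H^{2n-k}(G)$ there, suitably reindexed), which Corollary \ref{hl} shows to be injective under the hypothesis $\mathrm{Im}(\alpha^*_k)=\mathrm{Im}(\beta^*_k)$ that is part of the standing assumption of Lemma \ref{inc}. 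Once this identification is made, injectivity of $u^k$ for all $k$ follows, and the long exact sequence splits into the desired short exact sequences.
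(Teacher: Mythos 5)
Your proposal is correct and follows essentially the same route as the paper's own proof: you reduce exactness of the three-term sequences to injectivity of $\mathcal H^k\to \mathcal H^k(\mathcal J^{\bullet})$ on stalks at the singular points, note that for $k\geq n$ this is tautological since the lifted sections of Lemma \ref{inc} still represent the given classes in $\mathcal H^k(\mathcal J^{\bullet})_y=H^k(G_y)$, and for $0<k<n$ you identify the stalk map $H_{2n-k}(G_y)\to H^k(G_y)$ with the (suitably reindexed) composite of Corollary \ref{hl}, injective under the standing assumption $\Im(\alpha^*_k)=\Im(\beta^*_k)$ for $k\geq n$. This is precisely the paper's argument, with your extra care about the identification of the stalk maps being a welcome but inessential elaboration.
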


\begin{proof} It suffices to prove that  the map $H^k_y\to
\mathcal H^k(\mathcal J^{\bullet})_y$ is injective for any $y\in
\text{Sing}(Y)$ and any $k>0$. If $k\geq n$ this is obvious because
$H^k(\mathcal J^{\bullet})_y=H^k(G_y)=H^k_y$. When $1\leq k<n$ we
have $H^k_y=H_{2n-k}(G_y)$. And the map $H_{2n-k}(G_y)\to
H^k(\mathcal J^{\bullet})_y=H^k(G_y)$ is injective by Corollary
\ref{hl}.
\end{proof}

\begin{lemma}\label{sec} Let
$0\to \mathcal H^{\bullet}\stackrel{f^{\bullet}}\to \mathcal
J^{\bullet}\stackrel{g^{\bullet}}\to \mathcal K^{\bullet}\to 0$ be
an exact sequence of complexes of sheaves. Assume that $\mathcal
H^{\bullet}$ is a complex of injective sheaves with vanishing
differential $d^{k}_{\mathcal H^{\bullet}}=0$ for any $k$. The
following properties are equivalent.

\smallskip
(i) The sequence coming from the cohomology long exact sequence:
\begin{equation}\label{es3}
0 {\to} \mathcal H^k(\mathcal H^{\bullet})\to \mathcal
H^k(\mathcal J^{\bullet})\to \mathcal H^k(\mathcal K^{\bullet})\to
0
\end{equation}
is exact for any $k$.

\smallskip
(ii) There is a complex map $s^{\bullet}:\mathcal K^{\bullet}\to
\mathcal J^{\bullet}$ such that $g^{\bullet}\circ
s^{\bullet}={\rm{id}}_{\mathcal K^{\bullet}}$.
\end{lemma}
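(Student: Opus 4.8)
The plan is to treat the two implications asymmetrically, reducing the content to the construction of a chain-map \emph{retraction} of $f^{\bullet}$. For (ii) $\Rightarrow$ (i), the easy direction, a chain map $s^{\bullet}$ with $g^{\bullet}\circ s^{\bullet}=\mathrm{id}_{\mathcal K^{\bullet}}$ splits the exact sequence of complexes, so that $\mathcal J^{\bullet}\cong f^{\bullet}(\mathcal H^{\bullet})\oplus s^{\bullet}(\mathcal K^{\bullet})\cong \mathcal H^{\bullet}\oplus \mathcal K^{\bullet}$ as complexes. Since the cohomology-sheaf functors are additive on a direct sum of complexes, all connecting morphisms in the long exact sequence of $0\to \mathcal H^{\bullet}\to \mathcal J^{\bullet}\to \mathcal K^{\bullet}\to 0$ vanish, and (\ref{es3}) is exact (indeed split) for every $k$.

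For the substantive implication (i) $\Rightarrow$ (ii), I would first build a chain map $r^{\bullet}\colon \mathcal J^{\bullet}\to \mathcal H^{\bullet}$ with $r^{\bullet}\circ f^{\bullet}=\mathrm{id}_{\mathcal H^{\bullet}}$. Write $Z^k=\ker(d^k_{\mathcal J^{\bullet}})$ and $B^k=\mathrm{im}(d^{k-1}_{\mathcal J^{\bullet}})$ for the cocycle and coboundary sheaves of $\mathcal J^{\bullet}$. Because $d^{\bullet}_{\mathcal H^{\bullet}}=0$, we have $f^k(\mathcal H^k)\subseteq Z^k$ and $\mathcal H^k(\mathcal H^{\bullet})=\mathcal H^k$. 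Since $d^{\bullet}_{\mathcal H^{\bullet}}=0$, the chain-map condition on a level-wise map $r^k$ is exactly $r^{k+1}\circ d^k_{\mathcal J^{\bullet}}=0$, i.e. $r^k|_{B^k}=0$; together with $r^k\circ f^k=\mathrm{id}$ these two prescriptions are consistent precisely when $f^k(\mathcal H^k)\cap B^k=0$. This is where hypothesis (i) enters: the injectivity of $\mathcal H^k(\mathcal H^{\bullet})\to \mathcal H^k(\mathcal J^{\bullet})$ (a consequence of (\ref{es3}) being exact for all $k$) means that any local section $f^k(a)$ lying in $B^k$ represents the zero class in $Z^k/B^k$, hence $a=0$ and $f^k(a)=0$. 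Thus $f^k(\mathcal H^k)\oplus B^k$ is a subsheaf of $\mathcal J^k$ on which the rule $(f^k)^{-1}$ on the first summand and $0$ on the second is a well-defined morphism into $\mathcal H^k$; since $\mathcal H^k$ is injective, this morphism extends to $r^k\colon \mathcal J^k\to \mathcal H^k$, and by construction $r^{\bullet}$ is a chain map retracting $f^{\bullet}$.

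Finally I would convert $r^{\bullet}$ into the desired section by the splitting lemma at the level of complexes. At each $k$ the retraction gives a direct-sum decomposition $\mathcal J^k=f^k(\mathcal H^k)\oplus \ker r^k$, and since $\ker g^k=\mathrm{im}\,f^k$ the restriction $g^k|_{\ker r^k}\colon \ker r^k\to \mathcal K^k$ is an isomorphism (injective because $f^k(\mathcal H^k)\cap \ker r^k=0$, surjective because $g^k$ is). Setting $s^k:=(g^k|_{\ker r^k})^{-1}$ yields $g^k\circ s^k=\mathrm{id}$ and the identity $f^k r^k+s^k g^k=\mathrm{id}_{\mathcal J^k}$. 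That $s^{\bullet}$ commutes with differentials then follows formally from the chain-map identities for $f^{\bullet},g^{\bullet},r^{\bullet}$ and $d^{\bullet}_{\mathcal H^{\bullet}}=0$: precomposing both $d^k_{\mathcal J^{\bullet}}\circ s^k$ and $s^{k+1}\circ d^k_{\mathcal K^{\bullet}}$ with the epimorphism $g^k$ shows each equals $d^k_{\mathcal J^{\bullet}}$, so the two agree. The only genuine obstacle in the whole argument is promoting the evident level-wise retraction, which exists for free from injectivity of the $\mathcal H^k$, to one compatible with the differentials; the identification of hypothesis (i) with the disjointness $f^k(\mathcal H^k)\cap B^k=0$ is exactly what clears it.
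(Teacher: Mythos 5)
Your proof is correct, and it takes a genuinely different route from the paper's. The paper proves (i) $\Rightarrow$ (ii) by induction on the degree: one starts with a level-wise section $s^0$ (existing because $\mathcal H^0$ is injective) and, at each stage, corrects an arbitrary section $\sigma^{i+1}$ by the defect $\delta=\sigma^{i+1}\circ d^{i}_{\mathcal K^{\bullet}}-d^{i}_{\mathcal J^{\bullet}}\circ s^{i}$; hypothesis (i) enters through the fact that $g^i$ maps $\ker d^{i}_{\mathcal J^{\bullet}}$ onto $\ker d^{i}_{\mathcal K^{\bullet}}$, whence $\delta$ kills $\ker d^{i}_{\mathcal K^{\bullet}}$ and, by injectivity of $\mathcal H^{i+1}$, extends through $d^{i}_{\mathcal K^{\bullet}}$ to a correction $\tilde\delta$ with $s^{i+1}:=\sigma^{i+1}-\tilde\delta$. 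You instead build, in one non-inductive stroke, a chain retraction $r^{\bullet}$ of $f^{\bullet}$, using (i) in the equivalent form $f^k(\mathcal H^k)\cap \Im\, d^{k-1}_{\mathcal J^{\bullet}}=0$ (this is exactly the injectivity of $\mathcal H^k(\mathcal H^{\bullet})\to \mathcal H^k(\mathcal J^{\bullet})$, which by the long exact sequence of cohomology sheaves is indeed equivalent to the exactness of all the short sequences (\ref{es3})), and then pass formally from the retraction to the section; your epi-cancellation check that $s^{\bullet}$ commutes with the differentials is sound, since $d^k_{\mathcal J^{\bullet}}s^kg^k=d^k_{\mathcal J^{\bullet}}(\mathrm{id}-f^kr^k)=d^k_{\mathcal J^{\bullet}}$ and $s^{k+1}d^k_{\mathcal K^{\bullet}}g^k=(\mathrm{id}-f^{k+1}r^{k+1})d^k_{\mathcal J^{\bullet}}=d^k_{\mathcal J^{\bullet}}$, using $r^{k+1}\circ d^k_{\mathcal J^{\bullet}}=d^k_{\mathcal H^{\bullet}}\circ r^k=0$. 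Both arguments consume the injectivity of the sheaves $\mathcal H^k$ in an essential extension step, so neither is more economical in hypotheses; what your version buys is threefold: it isolates the single point where (i) is used as a transversality statement at cocycle level, it is degree-wise rather than inductive and hence needs no bounded-below assumption (the paper's induction tacitly starts at degree $0$), and it exhibits the conclusion as a direct sum decomposition of complexes $\mathcal J^{\bullet}\cong \mathcal H^{\bullet}\oplus\mathcal K^{\bullet}$, which also yields the converse implication (ii) $\Rightarrow$ (i) for free --- a direction the paper dismisses without proof.
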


\begin{proof} We only have to prove that (i) implies (ii).

Since $\mathcal H^0$ is injective, the exact sequence sequence $0
{\to} \mathcal H^0\to \mathcal J^0\to \mathcal K^0\to 0$ admits a
section $s^0:\mathcal K^0\to \mathcal J^0$, with $g^{0}\circ
s^{0}={\text{id}}_{\mathcal K^{0}}$. Therefore we may construct
$s^{\bullet}=\{s^i\}_{i\geq 0}$ using induction on $i$. Assume
$i\geq 0$ and that there are sections $s^0,\dots,s^i$, with
$s^h:\mathcal K^h\to \mathcal J^h$, $g^{h}\circ
s^{h}={\text{id}}_{\mathcal K^{h}}$, and $s^h\circ
d^{h-1}_{\mathcal K^{\bullet}}= d^{h-1}_{\mathcal
J^{\bullet}}\circ s^{h-1}$ for any $0\leq h\leq i$. As before,
since $\mathcal H^{i+1}$ is injective and the sequence $0 {\to}
\mathcal H^{i+1}\to \mathcal J^{i+1}\to \mathcal K^{i+1}\to 0$ is
exact, there exists a section $\sigma^{i+1}:\mathcal K^{i+1}\to
\mathcal J^{i+1}$, with $g^{i+1}\circ
\sigma^{i+1}={\text{id}}_{\mathcal K^{i+1}}$. A priori it may
happen that $\sigma^{i+1}\circ d^{i}_{\mathcal K^{\bullet}}$ is
different from $d^{i}_{\mathcal J^{\bullet}}\circ s^{i}$, so we
have to modify $\sigma^{i+1}$. To this purpose set:
$$
\delta:=\sigma^{i+1}\circ d^{i}_{\mathcal K^{\bullet}}-
d^{i}_{\mathcal J^{\bullet}}\circ s^{i}\in Hom(\mathcal K^i,\mathcal
J^{i+1}).
$$
Since
$$
g^{i+1}\circ \delta=g^{i+1}\circ \sigma^{i+1}\circ d^{i}_{\mathcal
K^{\bullet}} -g^{i+1}\circ d^{i}_{\mathcal J^{\bullet}}\circ
s^{i}=d^{i}_{\mathcal K^{\bullet}}-d^{i}_{\mathcal K^{\bullet}}=0,
$$
it follows that
\begin{equation}\label{puno}
\Im(\delta)\subseteq \mathcal H^{i+1}.
\end{equation}
Moreover, since (\ref{es3}) is exact, the map $g^i$ sends $\ker
d^{i}_{\mathcal J^{\bullet}}$ onto $\ker  d^{i}_{\mathcal
K^{\bullet}}$. This in turn implies that the section $s^i$ sends
$\ker d^{i}_{\mathcal K^{\bullet}}$ in $\ker d^{i}_{\mathcal
J^{\bullet}}$, because
$$
\ker g^i=\Im (f^i)\cong \mathcal H^i=\mathcal H^i({\mathcal
H^{\bullet}})\subseteq \ker d^{i}_{\mathcal J^{\bullet}}
$$
in view of the assumption $d^{i}_{\mathcal H^{\bullet}}=0$. We
deduce that:
\begin{equation}\label{pdue}
\ker d^{i}_{\mathcal K^{\bullet}} \subseteq \ker \delta,
\end{equation}
and from (\ref{puno}) and (\ref{pdue}) we get
$$
\delta\in  Hom(\mathcal K^i\slash \ker d^{i}_{\mathcal
K^{\bullet}},\mathcal H^{i+1}).
$$
Since $\mathcal H^{i+1}$ is injective, we may extend $\delta$ to a
map $\tilde\delta\in Hom(\mathcal K^{i+1},\mathcal H^{i+1})$ such
that
\begin{equation}\label{ptre}
\tilde \delta\circ d^{i}_{\mathcal K^{\bullet}} = \delta.
\end{equation}
We have
$$
\tilde\delta\in Hom(\mathcal K^{i+1},\mathcal J^{i+1})
$$
because $\mathcal H^{i+1}$ maps to $\mathcal J^{i+1}$ via
$f^{i+1}$. Now we define:
$$
s^{i+1}:=\sigma^{i+1}-\tilde\delta.
$$
From (\ref{ptre}) it follows that
$$
s^{i+1}\circ d^{i}_{\mathcal K^{\bullet}}= d^{i}_{\mathcal
J^{\bullet}}\circ s^{i},
$$
and since $\Im(\tilde\delta)\subseteq \mathcal H^{i+1}$ we also
have
$$
g^{i+1}\circ s^{i+1}={\text{id}}_{\mathcal K^{i+1}}.
$$
\end{proof}

\section{Proof of Theorem \ref{main1}}

As we noticed in Section 3,  by \cite[Theorem 1.11, p.
518]{Steenbrink} one knows that the Decomposition Theorem implies
(ii). Therefore we only have to prove that (ii) implies (i).

\smallskip
In view of  Lemma \ref{uno} and  Lemma \ref{due} we have
$\Im(\alpha^*_{k})=\Im(\beta^*_{k})$ for any $k\geq n$. From Lemma
\ref{inc}, Proposition \ref{tat}, and Lemma \ref{sec}, we get:
\begin{equation}\label{split}
R\pi_*\mathbb Q_X=\mathcal J^{\bullet}=\mathcal K^{\bullet}\oplus
\mathcal H^{\bullet}.
\end{equation}
Hence we only have  to prove that
$$
\mathcal K^{\bullet}\cong IC_Y[-n],
$$
where $IC^{\bullet}_Y=IC^{top}_Y[-n]$ denotes the intersection cohomology
complex of $Y$ \cite[p. 156]{Dimca2}. Observe that the restriction
$\alpha^{-1}\mathcal K^{\bullet}$ of $\mathcal K^{\bullet}$ to $U$
is $\mathbb Q_U$, and that, by (\ref{split}), we have $\mathcal
K^{\bullet}\in D^{b}_c(Y)$ \cite[p. 81-82]{Dimca2}. Therefore
$\mathcal K^{\bullet}[n]$ is an extension of $\mathbb Q_U[n]$
\cite[p. 134]{Dimca2}. So to prove that $\mathcal K^{\bullet}\cong
IC_Y[-n]$ it suffices to prove that $\mathcal K^{\bullet}[n]\cong
\alpha_{!*}\mathbb Q_U[n]$, i.e. that $\mathcal K^{\bullet}[n]$ is
the intermediary extension of $\mathbb Q_U[n]$ \cite[p.156 and
p.135]{Dimca2}. By \cite[Proposition 5.2.8, p. 135]{Dimca2}, this
in turn reduces to prove that for any $y\in \text{Sing}(Y)$ the
following two conditions hold true ($i_y:\{y\}\to Y$ denotes the
inclusion):

\medskip
(a) $\mathcal H^ki^{-1}_y \mathcal K^{\bullet}[n]=0$  for any
integer $k\geq 0$;

\medskip
(b) $\mathcal H^ki^{!}_y \mathcal K^{\bullet}[n]=0$ for any
integer $k\leq 0$.

\medskip\noindent
As for condition (a) we notice that \cite[p.130]{Dimca2}:
$$
\mathcal H^ki^{-1}_y \mathcal K^{\bullet}[n]=\mathcal H^k
(\mathcal K^{\bullet}[n])_y=\mathcal H^{k+n} (\mathcal
K^{\bullet})_y,
$$
and $\mathcal H^{k+n} (\mathcal K^{\bullet})_y=0$ because
$\mathcal J^{\bullet}=\mathcal K^{\bullet}\oplus \mathcal
H^{\bullet}$, and $ \mathcal H^{k+n} (\mathcal
J^{\bullet})_y=H^{k+n}(G_y)=\mathcal H^{k+n} (\mathcal
H^{\bullet})_y$ for $k\geq 0$.

For the condition (b), first notice that combining (\ref{split})
with Remarks \ref{hself} and \ref{jself}, we deduce that $\mathcal
K^{\bullet}[n]$ is self-dual. Therefore condition (b) reduces to
(a). In fact we have \cite[p. 130, proof of Lemma 5.1.15]{Dimca2}:
$$
\mathcal H^ki^{!}_y \mathcal K^{\bullet}[n]=\mathcal H^{-k}
(i^{-1}_y D(\mathcal K^{\bullet}[n]))^{\vee} =\mathcal H^{-k}
(i^{-1}_y (\mathcal K^{\bullet}[n]))^{\vee} =\mathcal H^{-k+n}
(\mathcal K^{\bullet})_y^\vee=0
$$
because $k\leq 0$.

\begin{remark}\label{rmain1}
(i) If $n=2$ then the map $H^{k-1}(G)\to H^{k}(Y,U)$ vanishes for
any $k\geq n+2$ for trivial reasons. In view of the connectivity of
the link, combining Remark \ref{lci} with Lemma \ref{uno} and Lemma
\ref{due}, we see that this holds true also when $Y$ is locally
complete intersection. Therefore, either when $n=2$ or when $Y$ is
locally complete intersection, in order to deduce the decomposition
(i) in Theorem \ref{main1}, one only has to check that the map
$H^{n}(G)\to H^{n+1}(Y,U)$ is the zero map. On the other hand, the
vanishing of the map $H^{n}(G)\to H^{n+1}(Y,U)$ is equivalent to
require that the natural map $H_n(G)\to H^n(G)\cong H_n(G)^{\vee}$
is onto (compare with (\ref{es1}), (\ref{es2}), and Corollary
\ref{hl}). Since $H_n(G)$ is contained in $H_n(X)$ via push-forward
(Lemma \ref{uno}), it follows that the map $H_n(G)\to H^n(G)\cong
H_n(G)^{\vee}$ is onto if and only if $H_n(G)$ is a non degenerate
subspace of $H_n(X)$ with respect to the natural intersection form
$H_n(X)\times H_n(X)\to H_0(X)\cong \mathbb Q$. By {Mumford's
Theorem} \cite{Ishii}, \cite{Mumford} we know this holds true when
$Y$ is a normal surface. Therefore, {\it in the case $Y$ is a normal
surface (or when $2\dim G<n$), our Theorem \ref{main1} gives a new
and simplified proof of the Decomposition Theorem for $\pi:X\to Y$}.

\medskip
(ii) Assume that $\pi:X\to Y$ is the blowing-up of $Y$ along
$\text{Sing}(Y)$, with smooth and connected fibres. By Poincar\'e
Duality we have $H_{2n-k}(G_y)\cong H^{k-2}(G_y)$ for any $y\in
\text{Sing}(Y)$. It follows that $H^k(X,U)\cong H_{2n-k}(G)\cong
\oplus_{y\in \text{Sing}(Y)} H_{2n-k}(G_y)\cong \oplus_{y\in
\text{Sing}(Y)}  H^{k-2}(G_y)$. Hence the map $H^k(X,U)\to H^k(G)$
identifies with the map $\oplus_{y\in \text{Sing}(Y)}
H^{k-2}(G_y)\to \oplus_{y\in \text{Sing}(Y)} H^{k}(G_y)$ given, on
each summand $H^{k-2}(G_y)\stackrel{}{\to} H^{k}(G_y)$, by the
self-intersection formula, i.e. by the cup-product with the first
Chern class $c_1(N_y)\in H^2(G_y)$ of the normal bundle $N_y$ of
$G_y$ in $X$. Since $\pi$ is the blowing-up along the finite set
$\text{Sing}(Y)$, the dual normal bundle $N_y^{\vee}\cong \mathcal
O_{G_y}(1)$ is ample for any $y\in \text{Sing}(Y)$. By Hard
Lefschetz Theorem it follows that the map
$H^{k-2}(G_y)\stackrel{}{\to} H^{k}(G_y)$ is onto for any $k\geq n$,
and so also the map $H^k(X,U)\to H^k(G)$ is. By (\ref{es1}), this
implies the vanishing of the map $H^k(G)\to H^{k+1}(Y,U)$.
Therefore, also in this case our Theorem \ref{main1} gives a new and
simplified proof of the Decomposition Theorem for $\pi$.

%By Hard Lefschetz Theorem such a map is onto for $k\geq n$ because
%$N_y^{\vee}\cong \mathcal O_{G_y}(1)$ is ample, in view of our
%assumption on $\pi$. By (\ref{es1}) this implies the vanishing of
%the map $H^k(G)\to H^{k+1}(Y,U)$. Therefore, also in this case our
%Theorem \ref{main1} gives a new and simplified proof of the
%Decomposition Theorem for $\pi$.

\medskip
(iii) More generally, assume only that the fibres of $\pi:X\to Y$
are smooth and connected, so that $\pi$ is not necessarily the
blowing-up along $\text{Sing}(Y)$. Using the extension of the Hard
Lefschetz Theorem to bundles of higher rank due to Bloch and
Gieseker \cite{BG}, \cite{L}, with a similar argument as before one
proves that {\it if the dual normal bundle $N_y^{\vee}$ of $G_y$ in
$X$ is ample for any $y\in \text{Sing}(Y)$, then the map $H^k(G)\to
H^{k+1}(Y,U)$ vanishes for any $k\geq n$}. In fact,  set $h_y:=\dim
X-\dim G_y$ for any $y\in \text{Sing}(Y)$. Now the map $H^k(X,U)\to
H^k(G)$ identifies with the map $\oplus_{y\in \text{Sing}(Y)}
H^{k-2h_y}(G_y)\to \oplus_{y\in \text{Sing}(Y)} H^{k}(G_y)$ given,
on each summand $H^{k-2h_y}(G_y)\stackrel{}{\to} H^{k}(G_y)$, by the
cup-product with the top Chern class
$c_{h_y}(N_y)=(-1)^{h_y}c_{h_y}(N_y^{\vee})\in H^{2h_y}(G_y)$ of the
normal bundle $N_y$ of $G_y$ in $X$. And such a map is onto for
$k\geq n$ by the quoted extension of  the Hard Lefschetz Theorem,
because $N_y^{\vee}$ is ample. We refer to \cite[Proposition 2.12
and proof]{DGF2} for examples of resolution of singularities
verifying previous assumptions.
\end{remark}

\section{Proof of Theorem \ref{main2}}

(i) $\implies$ (ii) By Lemma \ref{uno} and Lemma \ref{due} we have
$\Im(\alpha^*_k)= \Im(\beta^*_k)$ for any $k$. Let
$y_1,\dots,y_a,y_{a+1},\dots, y_b$ be a basis  of $H^k(Y)$ such
that $\alpha^*_k y_1,\dots,\alpha^*_k y_a$ is a basis for
$\Im(\alpha^*_k)=\Im(\beta^*_k)$, and $y_{a+1},\dots, y_b$ a basis
for $\ker \alpha^*_k$. Since $\pi^*_k(\ker \alpha^*_k)\subseteq \ker
\beta^*_k$, we may extend $\pi^*_k y_{a+1},\dots,\pi^*_k y_b$ to a
basis $\pi^*_k y_{a+1},\dots,\pi^*_k y_b,x_{b+1},\dots,x_c$ of
$\ker\beta^*_k$. Then $\pi^*_k y_1,\dots,\pi^*_k y_a,\pi^*_k
y_{a+1},\dots,\pi^*_k y_b,x_{b+1},\dots,x_c$ is a basis for
$H^k(X)$. Define $\theta_k:H^k(X)\to H^k(Y)$ setting
$\theta_k(\pi^*_k(y_i)):=y_i$, and $\theta_k(x_i):=0$. Then
$\theta_{\bullet}$ is a natural morphism.

\bigskip
(ii) $\implies$ (i) The existence of a natural morphism implies
that $\pi^*_k$ is injective and
$\Im(\beta^*_k)\subseteq\Im(\alpha^*_k)$ for any $k$. Since in
general we have $\Im(\alpha^*_k)\subseteq \Im(\beta^*_k)$ it
follows that $\Im(\alpha^*_k)= \Im(\beta^*_k)$ for any $k$. By
Lemma \ref{uno} and Lemma \ref{due} we get (i).

\bigskip
(ii) $\implies$ (iv) Since $\pi^*_k$ is injective for any $k$,
using (\ref{es4}) we get a short exact sequence:
$$
0 \to H^{k}(Y)\stackrel
{\pi^*_k}{\to}H^{k}(X)\stackrel{i^*_k}{\to} H^{k}(G)\stackrel
{}{\to}0
$$
for any $k\geq 1$. In particular, for any $k\geq 1$, we have
\begin{equation}\label{d1}
H^k(X)\cong H^k(Y)\oplus H^k(G).
\end{equation}
On the other hand, since $\theta_k\circ
\pi^*_k={\rm{id}}_{H^k(Y)}$, the short exact sequence
$$
0 \to \ker\theta_k\stackrel
{}{\to}H^{k}(X)\stackrel{\theta_k}{\to} H^{k}(Y)\stackrel {}{\to}0
$$
admits $\pi^*_k$ as a section. It follows another decomposition:
\begin{equation}\label{d2}
H^k(X)= \pi^*_k H^k(Y)\oplus \ker\theta_k.
\end{equation}
Comparing (\ref{d1}) with (\ref{d2}) we see that
$$
\ker \theta_k\cong H^k(G)
$$
for any $k\geq 1$. On the other hand since $\alpha^*_k\circ
\theta_k=\beta^*_k$ we have
\begin{equation}\label{nucleo}
\ker\theta_k\subseteq \ker(H^{k}(X)\stackrel{\beta^*_k}{\to}
H^{k}(U))=\Im(H^{k}(X,U)\stackrel{}{\to} H^{k}(X)).
\end{equation}
Since $H^{k}(X,U)\cong H_{2n-k}(G)$ it
follows that
\begin{equation}\label{fibre}
\dim H^k(G)\leq \dim H_{2n-k}(G)
\end{equation}
for any $k\geq 1$. By Universal-coefficient formula \cite[p. 248
]{Spanier} we deduce that, for $1\leq k\leq 2n-1$,
\begin{equation}\label{poincare}
\ker\theta_k\cong H^k(G)\cong H_{2n-k}(G).
\end{equation}

Taking into account that $\Im(\alpha^*_n)=\Im(\beta^*_n)$, combining
(\ref{d1}),  (\ref{poincare}) and Corollary \ref{ic}, it follows that
$\dim H^k(Y)=\dim IH^k(Y)$ for any $k$. Therefore, by (\ref{cic}), it suffices to prove
that $\alpha^*_k:H^k(Y)\to H^k(U)$ is surjective for any $k<n$. To this purpose notice
that, for $k<n$, $\beta^*_k$ is surjective by Lemma \ref{uno}. This implies that also $\alpha^*_k$ is
by   (\ref{d2}) and (\ref{nucleo}) (compare with diagram  (\ref{t})).

\bigskip
(iv) $\implies$ (vii) Since intersection cohomology verifies
Poincar\'e Duality \cite[p. 158]{Dimca2}, we have:
$$
H^h(Y)=IH^h(Y)=(IH^{2(m+1)-h}(Y))^{\vee}=(H^{2(m+1)-h}(Y))^{\vee}=H_{2(m+1)-h}(Y).
$$

\bigskip
(vii) $\implies$ (iv) This follows from (\ref{cic}) and Remark
\ref{triangolo}.

\bigskip
(v) $\iff$ (vi) $\iff$ (vii) By \cite[Theorem 2, Lemma 2, Lemma
3]{McCrory} we know that the duality morphism is an isomorphism if
and only if $Y$ is a $\mathbb Q$-cohomology manifold, which is
equivalent to say that $Y$ is a $\mathbb Q$-intersection
cohomology manifold by \cite[Theorem 1.1]{Massey} (compare also
with \cite{BM}).

\bigskip
(vii) $\implies$ (ii) Denote by $d^Y_k:H^k(Y)\to H_{2n-k}(Y)$ the
duality isomorphism, by  $d^X_k:H^k(X)\cong H_{2n-k}(X)$ the
Poincar\'e Duality isomorphism, by $\pi_{*,k}: H_{2n-k}(X)\to
H_{2n-k}(Y)$ the push-forward. Set $\theta_k: H^{k}(X)\to
H^{k}(Y)$ with
$$
\theta_k:=(d^Y_k)^{-1}\circ \pi_{*,k}\circ d^X_k.
$$
Then $\theta_{\bullet}$ is a natural morphism.

\bigskip
(iii) $\iff$ (ii) We only have to prove that (ii) implies (iii).
This follows from Remark \ref{dimuno} because $Y$ is a $\mathbb
Q$-cohomology manifold.

\bigskip
(ii) $\implies$ (viii) Since  $Y$ is a $\mathbb
Q$-intersection cohomology manifold, combining (\ref{poincare}) with  Theorem
\ref{main1} we get:
$$
R\pi_*\mathbb Q_X\cong \mathbb Q_Y\oplus \mathcal H^{\bullet}\cong
\mathbb Q_Y\oplus \bigoplus_{k\geq 1} R^k\pi_*\mathbb Q_X[-k].
$$

\bigskip
(viii) $\implies$ (ii) See Remark \ref{unicity}, (i).

\bigskip
(ii) $\iff$ (ix) By \cite[Theorem 1.1]{Massey} we deduce that
$Y$ is a $\mathbb Q$-intersection cohomology manifold  if and only
if for any $y\in {\rm{Sing}}(Y)$ the link $\partial B_y$ has the
same $\mathbb Q$-homology type as a sphere $S^{2n-1}$. On the
other hand, via deformation to the normal cone, we may identify
$\partial B_y$ with the link of the vertex of the projective cone
over  $G_y \subseteq \mathbb P^{N-1}$. Restricting the Hopf bundle
$S^{2N-1}\to \mathbb P^{N-1}$ to $G_y$, we obtain an $S^1$-bundle
$\partial B_y\to G_y$ inducing the Thom-Gysin sequence \cite[p.
260]{Spanier}
$$
\dots\to H^k(G_y)\to H^k(\partial B_y)\to H^{k-1}(G_y)\to
H^{k+1}(G_y)\to H^{k+1}(\partial B_y)\to\dots
$$
And this sequence implies that $\partial B_y$ has the same
$\mathbb Q$-homology type as a sphere $S^{2n-1}$ if and only if
${H^{\bullet}(G_y)}\cong {H^{\bullet}(\mathbb P^{n-1})}$.

\begin{remark}\label{fibre'} By (\ref{fibre}) it follows that $h_2(G)\leq
h_{2n-2}(G)$. Therefore { if $Y$ is a $\mathbb Q$-cohomology
manifold then  $\dim G=0$ or $\dim G=n-1$}.
\end{remark}

\end{document}